\documentclass[a4paper,11pt]{amsart}

\usepackage[left=2.7cm,right=2.7cm,top=3.5cm,bottom=3cm]{geometry}
\usepackage{latexsym}
\usepackage{amsthm,amssymb,amsmath,amsfonts,amscd}
\usepackage{verbatim}
\usepackage[latin1]{inputenc}

 \usepackage{latexsym}
 \usepackage{longtable}



\newfont{\cyr}{wncyr10 scaled 1100}
\newfont{\cyrr}{wncyr9 scaled 1000}

\theoremstyle{plain}
\newtheorem{theorem}{Theorem}[section]
\newtheorem{proposition}[theorem]{Proposition}

\newtheorem{corollary}[theorem]{Corollary}

\theoremstyle{definition}
\newtheorem{conjecture}[theorem]{Conjecture}
\newtheorem{definition}[theorem]{Definition}
\newtheorem{assumption}[theorem]{Assumption}

\theoremstyle{remark}
\newtheorem{remark}[theorem]{Remark}


\usepackage[usenames]{color}
\definecolor{Indigo}{rgb}{0.2,0.1,0.7}
\definecolor{Violet}{rgb}{0.5,0.1,0.7}
\definecolor{White}{rgb}{1,1,1}
\definecolor{Green}{rgb}{0.1,0.9,0.2}



\newcommand{\smallmat}[4]{\bigl(\begin{smallmatrix}#1&#2\\#3&#4\end{smallmatrix}\bigr)}






\newcommand{\W}{\mathbb W}
\newcommand{\D}{\mathbb D}


\newcommand{\pwseries}[1]{[[#1]]}

\newtheorem{thm}{Theorem}[section]

\newtheorem{rmk}[thm]{Remark}


\newcommand{\Hom}{{\operatorname{Hom}}}

\newcommand{\ord}{{\operatorname{ord }}}

\newcommand{\Spec}{{\operatorname{Spec }}}

\newcommand{\SL}{{\operatorname{SL }}}
\newcommand{\GL}{{\operatorname{GL}}}

\newcommand{\Gal}{{\operatorname{Gal}}}

\newcommand{\Sel}{{\operatorname{Sel}}}





\newcommand{\calO}{{\mathcal{O}}}

\newcommand{\calX}{{\mathcal{X}}}


\def\C{\mathbb{C}}
\def\D{\mathbb{D}}

\def\M{\mathrm{M}}

\def\Q{\mathbb{Q}}
\def\R{\mathbb{R}}
\def\T{\mathbb{T}}

\def\Z{\mathbb{Z}}

\include{thebibliography}

\begin{document}

\title{The $p$-adic variation of the Gross-Kohnen-Zagier Theorem}
\today 
\author{Matteo Longo, Marc-Hubert Nicole}

\begin{abstract} We relate $p$-adic families of Jacobi forms to Big Heegner points constructed by B. Howard, in the spirit of the Gross-Kohnen-Zagier theorem. We view this as a $\GL(2)$ instance of a $p$-adic Kudla program.
\end{abstract}

\subjclass[2000]{}
\keywords{}
\maketitle


 \section  {Introduction}In their seminal paper \cite{GKZ}, Gross, Kohnen and Zagier showed that 
Heegner points are generating series of a Jacobi form arising from as a theta lift.
Suppose that $E/\Q$ is an elliptic curve of conductor $M$. 
We assume that the sign of the functional equation of $L(E/\Q,s)$ at $s=1$ is $-1$. Then it is well known (\cite{GZ}, \cite{Kol2}, \cite{Kol1}) that if 
the value $L'(E/\Q,s)_{s=1}$ of the first derivative of $L(E/\Q,s)$ at $s=1$ is non-zero, then the $\Q$-vector space $E(\Q)\otimes_\Z\Q$ is one-dimensional. 
Moreover, one can show that this vector space is generated by 
a  {Heegner point $P_K$} attached to a quadratic imaginary field 
$K$ in which all primes $\ell\mid N$ are split
and such that $L'(E/K_D,s)_{s=1}\neq 0$. 
\noindent
As the imaginary quadratic field $K$ varies, it is 
then a natural question to investigate the relative positions of the points 
$P_K$ on this one-dimensional line. 
The theorem of Gross-Kohnen-Zagier answers this question. For each discriminant $D$ and each residue class $r\mod{2N}$
subject to the condition that $D\equiv r^2\mod{4N}$, one defines 
a Heegner point $P_{D,r}$ (so the point $P_K$ considered above corresponds 
to one of these points, for a suitable choice of the pair $(D,r)$). 
Then \cite[Thm. C]{GKZ} shows that
the relative positions of the points $P_{D,r}$, at least under the condition that $(M,D)=1$, 
are encoded by the $(D,r)$-th Fourier coefficient of the Jacobi form 
$\phi_{f_E}$ 
coming from the theta lifting of $f_E$, 
where $f_E$ is the weight $2$ newform of level $\Gamma_0(M)$ associated with $E$ by modularity; one may express briefly this relation by saying 
that Heegner points are generating series for Jacobi forms, and 
actually \cite{GKZ} formulates an  \emph{ideal statement} in which the above relation is conjecturally extended to all coefficients of $\phi_{f_E}$, including therefore those $D$ 
such that $(D,M)\neq 1$. 
Several generalizations are available in the literature, especially by Borcherds \cite{Bor1}, using singular theta liftings, and Yuan-Zhang-Zhang \cite{YZZ} using a multiplicity one theorem for automorphic representations. In particular, these works complete \cite{GKZ} by essentially proving the ideal statement alluded to above. 

The purpose of this paper is to investigate a variant in families of the GKZ theorem, in which all objects are made to vary via $p$-adic interpolation. Note that the GKZ theorem (and the closely related famous Hirzebruch-Zagier theorem) can be seen as the historical starting point of the conjectural generalization by Kudla involving higher dimensional varieties, called the Kudla program for brief, relating for example algebraic cycles on Shimura varieties and Fourier coefficients of modular forms. We hope that our $p$-adic GKZ theorem will trigger higher dimensional generalizations, giving rise to a $p$-adic analogue of the Kudla program.

We begin by briefly explaining some of the questions motivating this work. Given a primitive branch $\mathcal{R}$ of a Hida family, Howard in \cite{Ho2} introduced certain cohomology classes in the Selmer group of Hida's Big ordinary representation attached to a Hida family, that he called Big Heegner points (we refer to \cite{Ho2} for the terminology which is not explicitly introduced here). Following Hida's strategy to construct Big Galois representations, these Big Heegner points are constructed as limits of classical Heegner points on modular curves. Their specialisations at arithmetic points of $\mathcal{R}$ of weight bigger than $2$ are known, thanks to works of Howard \cite{Ho2}, Castella \cite{Cas}, \cite{CasHeeg} and, more recently, Disegni \cite{Disegni}, to interpolate classical Heegner cycles (\cite{Nek2}, \cite{Zh}). Moreover, under suitable assumptions, the Selmer group of the Hida Big Galois representation is a $\mathcal{R}$-module of rank $1$, and Big Heegner points are non-torsion elements in this Selmer group (\cite{Ho1}, \cite{Ho2}). Thus, extending scalars to the fraction field $\mathcal{K}=\mathrm{Frac}(\mathcal{R})$ of $\mathcal{R}$, we obtain a $\mathcal{K}$-vector space of dimension $1$, and all Heegner points become proportional to a fixed generator. As in the paper \cite{GKZ}, one may thus consider the proportionality coefficients which are elements of $\mathcal{K}$. 

On the other hand, the theta correspondence can be applied to each arithmetic point in the given Hida family, and in \cite{LN-Jacobi} is shown, using methods from \cite{St}, the existence of a $p$-adic family of Jacobi forms whose specialisation at arithmetic points of the metaplectic covering of $\mathcal{R}$ gives a Jacobi form coming from a theta lift of an arithmetic point in the Hida family. In other words, \cite{LN-Jacobi} constructs a $p$-adic family of Jacobi forms $\mathbb{S}$ interpolating classical theta lifts of arithmetic points in the given Hida family; here the coefficients of this $p$-adic family of Jacobi forms $\mathbb{S}$ are elements in 
$\mathcal{K}$. 
 We refer to Section  \ref{section5.2} below or \cite{LN-Jacobi} for details.

It seems then natural to formulate a conjecture which relates the coefficients of the $p$-adic family of Jacobi forms $\mathbb{S}$ and the proportionality coefficients associated to Big Heegner points, in a way similar to the original GKZ. We can thus make the following 

\begin{conjecture}\label{conjecture} Let $\mathcal{R}$ be a primitive branch of a Hida family. There exists a Zariski open subset in $\Spec(\mathcal{R})$ where the proportionality coefficients relating Big Heegner points are equal to the Jacobi-Fourier coefficients of the theta lift $\mathbb{S}$ of the Hida family.\end{conjecture}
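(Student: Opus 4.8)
\medskip
\noindent\textit{Outline of an approach.} The plan is to prove the conjecture by $p$-adic interpolation, reducing it at a Zariski-dense set of arithmetic points of $\Spec(\mathcal{R})$ to the classical Gross--Kohnen--Zagier theorem and its higher-weight counterpart. The first step is to construct a family of Big Heegner classes $\mathfrak{Z}_{D,r}\in\Sel(\mathcal{R})$, indexed by the pairs $(D,r)$ with $D\equiv r^2\pmod{4N}$, refining Howard's construction in \cite{Ho2}: each $\mathfrak{Z}_{D,r}$ should specialize at every arithmetic point $\phi$ of weight $k_\phi\geq 2$ to (a fixed multiple of) the classical Heegner cycle $y_{D,r,\phi}$ in $H^1_f(K,V_\phi)$ attached to $\phi$, as described by Howard, Castella \cite{Cas}, \cite{CasHeeg} and Disegni \cite{Disegni}. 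While varying $r$ and the $p$-power part of the conductor is covered by \cite{Ho2}, letting the fundamental discriminant $D$ vary forces one to assemble CM points by orders in different imaginary quadratic fields into a single norm-compatible system over the modular (or Shimura curve) tower; organizing this compatibly, and pinning down the auxiliary choices so that the resulting classes are canonical up to $\mathcal{R}^\times$, is the first technical heart of the argument.

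Next, by \cite{Ho1}, \cite{Ho2} the module $\Sel(\mathcal{R})\otimes_{\mathcal{R}}\mathcal{K}$ is one-dimensional over $\mathcal{K}$ under the standard hypotheses (ordinarity at $p$, big residual image, a Heegner condition); fixing a generator $\mathfrak{Z}$ one writes $\mathfrak{Z}_{D,r}=\lambda_{D,r}\cdot\mathfrak{Z}$ with $\lambda_{D,r}\in\mathcal{K}$, and these $\lambda_{D,r}$ are the ``proportionality coefficients'' of the statement. On the automorphic side, \cite{LN-Jacobi} provides the $p$-adic family of Jacobi forms $\mathbb{S}$ whose $(D,r)$-th Jacobi--Fourier coefficient $a_{D,r}(\mathbb{S})\in\mathcal{K}$ specializes at $\phi$ to the corresponding coefficient of the classical theta lift of $f_\phi$. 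One then fixes an arithmetic point $\phi$ of weight $k_\phi>2$ with $L'(f_\phi/K,k_\phi/2)\neq 0$: the specialization of $\Sel(\mathcal{R})$ is the one-dimensional space $H^1_f(K,V_\phi)$, $\mathfrak{Z}$ specializes to a generator, and Nekov\'a\v{r}'s theory of Heegner cycles combined with the higher-weight Gross--Kohnen--Zagier theorem of Zhang \cite{Zh} (respectively, in weight $2$, \cite{GKZ}, \cite{Bor1}, \cite{YZZ}) yields an identity $y_{D,r,\phi}=c_\phi\cdot a_{D,r}(\phi_{f_\phi})\cdot y_\phi$ in $H^1_f(K,V_\phi)$, where $c_\phi\in\overline{\Q}_p^\times$ depends on $\phi$ but not on $(D,r)$. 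Comparing the two displays gives $\phi(\lambda_{D,r})=c_\phi\cdot\phi(a_{D,r}(\mathbb{S}))$ for all such $\phi$.

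Finally, one checks that the scalars $c_\phi$ interpolate: after renormalizing $\mathfrak{Z}$ and $\mathbb{S}$ by a single element of $\mathcal{K}^\times$ (absorbing the relevant periods and the weight-dependent constant from the classical theorem) one arranges $c_\phi=\phi(c)$ for a fixed $c\in\mathcal{K}^\times$. Since arithmetic points of weight $>2$ satisfying the non-vanishing condition are Zariski-dense in $\Spec(\mathcal{R})$, the meromorphic function $\lambda_{D,r}-c\cdot a_{D,r}(\mathbb{S})$ on $\Spec(\mathcal{R})$ vanishes on a dense set and is therefore identically zero; deleting the finitely many poles, together with the proper closed subscheme where the rank-one statements break down, produces the desired Zariski-open locus. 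As in \cite{GKZ}, one would first treat the coefficients with $D$ prime to the level and to $p$ and then extend to all $(D,r)$ along the lines of \cite{Bor1}, \cite{YZZ}, now carried out in the $p$-adic family.

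I expect the principal obstacles to be threefold: (i) obtaining the higher-weight Gross--Kohnen--Zagier input at the level of an \emph{exact} identity in the rank-one Selmer line, and not merely an identity of heights; (ii) the weight-$2$ specialization of Big Heegner points, where exceptional-zero phenomena may force the comparison to hold only after inverting an $\mathcal{L}$-invariant, so that weight $2$ itself may have to be excluded from the open locus; and (iii) the simultaneous construction of the $(D,r)$-compatible system of Big Heegner classes with varying fundamental discriminant together with the interpolation of the constants $c_\phi$ into a single element of $\mathcal{K}^\times$ — this is where the bulk of the work, and the reason the conclusion is only a Zariski-open rather than the whole of $\Spec(\mathcal{R})$, will lie.
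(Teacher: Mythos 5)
The statement you are trying to prove is Conjecture \ref{conjecture}, which the paper explicitly leaves open: the authors say they ``do not attack Conjecture \ref{conjecture} directly'' and only establish partial, local evidence (Theorems \ref{Result I} and \ref{Result II}, Corollary \ref{corointro}, and formulas \eqref{GKZ1}--\eqref{GKZ4}). So there is no proof in the paper to match, and your text is rightly presented as a strategy rather than a proof. That said, your strategy is essentially the one the paper itself follows for its partial result, and is also close to the alternative route sketched in Remark \ref{rem5.4}: specialize at a Zariski-dense set of arithmetic points, invoke a classical Gross--Kohnen--Zagier statement there (Xue's higher-weight version, via the comparison of Big Heegner points with Heegner cycles), and interpolate the comparison constants. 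The interpolation of your constants $c_\phi$ is exactly what the square-root Bertolini--Darmon $p$-adic $L$-functions $\mathcal{L}_{n,r}$ and the Greenberg--Stevens $\Lambda$-adic modular symbol accomplish in the paper, at the cost of the Euler factors $\mathcal{E}_p$ and $\mathcal{E}_2$ and of working only on a rigid-analytic neighborhood of a fixed weight $2k_0\equiv 2\bmod{p-1}$.

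The genuine gaps are precisely the reasons the statement remains a conjecture, and your list of obstacles captures some but not all of them. (i) The higher-weight GKZ input (Theorem \ref{Xue}) is only known conditionally on the positive-definiteness of the Gillet--Soul\'e height pairing (Assumption \ref{ass3}, a consequence of Bloch--Beilinson); your worry about getting an exact identity in the rank-one Selmer line rather than an identity of heights is exactly this issue, and Zemel's Borcherds-style approach does not obviously supply the needed one-dimensionality of $\mathrm{Heeg}_k(X_0(N))_f\otimes_\Z\Q$. (ii) The specialization of $\mathfrak{Z}^{\mathrm{How}}_{D,r}$ to Heegner cycles with the precise Euler factor (Theorem \ref{Castella}) is currently known only when $p$ splits in $K_D$ and for trivial character and weights in a fixed congruence class; the inert case is itself conjectural (formula \eqref{inert formula}), so your claimed identity at ``every arithmetic point'' is not available, and the resulting open locus one can actually reach is a $p$-adic disc of weights, not a Zariski open of $\Spec(\mathcal{R})$ covering all characters. (iii) You misplace one difficulty: no norm-compatible system across varying fundamental discriminants $D$ is needed --- Howard's construction already produces $\mathfrak{Z}^{\mathrm{How}}_{D,r}$ for each pair $(D,r)$ separately, and the hard content is not assembling these classes but controlling their specializations and the Euler-factor discrepancy between the theta lift of the level-$N$ newform $f_{2k}^\sharp$ and the family $\mathbb{S}$ interpolating lifts of the $p$-stabilizations (the factor $\mathcal{E}_2$ of \eqref{E2}, which forces $2k>2$). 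Your point (ii) about weight $2$ and exceptional-zero phenomena is well taken and is reflected in the paper's exclusion of $2k=2$.
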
 

A more precise form of this conjecture is presented below, under some technical assumptions, as Conjecture \ref{conjecture2}, to which the reader is referred for details. 

In this paper we do not attack Conjecture \ref{conjecture} directly (see Remark \ref{rem5.4} for a possible strategy to attach this conjecture). Instead, we present evidence in favour of this conjecture of a more local nature, in which both Big Heegner points and Jacobi-Fourier coefficients of theta lifts of classical forms of trivial character in the Hida family are related (up to explicit Euler factors) to certain $p$-adic $L$-functions constructed in \cite{BD-Hida}; as a consequence, Big Heegner points and Jacobi-Fourier coefficients of Jacobi forms are indeed related, at least locally, as predicted by Conjecture \ref{conjecture}, and are both seen as manifestation of the arithmetic of $p$-adic $L$-functions. 

We now state our results in a more precise way. 
Pick a positive even integer $2k_0$, and 
an ordinary $p$-stabilized newform $f_{2k_0}$ of level $\Gamma_0(Np)$, trivial character
and weight $2k_0$, where $N\geq1$ is an odd integer and $p\nmid N$ a prime number bigger or equal to $5$.
Let $\mathcal{X}$ be the group of continuous endomorphisms of $\Z_p^\times$; we view $\Z$ inside $\mathcal{X}$ 
via the map $k\mapsto [x\mapsto x^{k-2}]$. 
Let \[f_\infty(\kappa)=\sum_{n\geq 1}a_n(\kappa)q^n\] be the Hida family 
passing through $f_{2k_0}$, where $\kappa\mapsto a_n(\kappa)$ are 
$p$-adic analytic functions defined in a neighborhood $U$ of $2k_0$ in $\mathcal{X}$; 
if we denote by $\mathcal{A}_{2k_0}$ the subring of $\bar\Q_p[\![X]\!]$ consisting of power series converging in a neighborhood of $2k_0$, then $a_n\in\mathcal{A}_{2k_0}$. 
For each even integer $2k\in U$, let $f_{2k}$ be the weight $2k$ modular form with 
trivial character appearing in the Hida family.  
The form $f_{2k}$
is then an ordinary $p$-stabilized form of weight $2k$ and level $\Gamma_0(Np)$, 
and $f_{2k_0}$ is the form we started with above, justifying the slight abuse of notation.
If 
$f_{2k}$ is not a newform (which is always the case if $k>1$), 
then we write $f_{2k}^\sharp$ for the ordinary newform of weight $2k$ and level $\Gamma_0(N)$ whose $p$-stabilization is $f_{2k}$. 
For each of the forms $f_{2k}^\sharp$ we have an associated Jacobi form 
\[\mathcal{S}_{D_0,r_0}({f_{2k}^\sharp})=\sum_{D=r^2-4Nn}c_{f_{2k}^\sharp}(n,r)q^n\zeta^r\]
of weight $k+1$, index $N$ 
having the same Hecke eigenvalues as 
$f_{2k}^\sharp$;  
this form depends on the choice of a pair of integers $(D_0,r_0)$
such that $D_0$ is a fundamental discriminant which we assume to be prime to $p$ and $D_0\equiv r_0^2\mod 4N$.
To be clear, in the above formula the sum is over all negative discriminants $D$, and the coefficients 
$c_{f_{2k}^\sharp}(n,r)$ only depend on the residue class of $r\mod{2N}$.  
The first observation we make is that these coefficients $c_{f_{2k}^\sharp}(n,r)$ can be interpolated, up to Euler factors, by $p$-adic analytic functions. We show that a suitable linear combination 
$\mathcal{L}_{n,r}\in\mathcal{A}_{2k_0}$ 
of the square root $p$-adic $L$-functions 
attached to 
genus characters of real quadratic fields, introduced in \cite{BD}, \cite{BD-Hida} and studied extensively in \cite{Sha}, \cite{GSS}, \cite{LV-IMRN}, interpolates Fourier-Jacobi 
coefficients $c_{f_{2k}^\sharp}(n,r)$. Our first result is, corresponding to Theorem \ref{LambdaJacobi} below, is:

\begin{theorem}\label{Result I} For all positive 
even integers $2k$ in a sufficiently small neighbourhood of $2k_0$, 
and for all $D=r^2-4nN$ such that $p\nmid D$, we have
$\mathcal{L}_{n,r}(2k)\overset{\bullet}=c_{f_{2k}^\sharp}(n,r)
$
where $\overset\bullet=$ means equality up to an explicit Euler factor
and a suitable $p$-adic period, both independent of $(n,r)$ and 
non-vanishing. \end{theorem}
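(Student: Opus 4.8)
The plan is to express both sides of the claimed identity in terms of the square-root $p$-adic $L$-functions of Bertolini--Darmon and then compare. First I would recall from \cite{GKZ} the explicit formula for the Fourier--Jacobi coefficient $c_{f_{2k}^\sharp}(n,r)$: for a newform $g=f_{2k}^\sharp$ of weight $2k$ and level $\Gamma_0(N)$, the coefficient attached to a negative discriminant $D=r^2-4nN$ is, up to a fixed nonzero constant depending only on $g$ (the Petersson norm / period and the normalization of the theta lift, but not on $(n,r)$), proportional to the square root of the central value $L(g,\chi_D,k)$ when $D$ is a fundamental discriminant, and more generally to a finite sum of such square roots indexed by the decompositions $D=D_1 D_2$ into fundamental discriminants, weighted by the genus-character values --- this is precisely the Waldspurger-type formula underlying the GKZ construction. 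The key point is that this sum over genus-character decompositions is exactly the combinatorial shape of the linear combination defining $\mathcal{L}_{n,r}$.

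Next I would invoke the interpolation property of the square-root $p$-adic $L$-function $L_p(f_\infty, \chi)$ attached to a Hida family $f_\infty$ and a genus character $\chi$ of a real quadratic field, as constructed in \cite{BD-Hida} and studied in \cite{Sha}, \cite{GSS}, \cite{LV-IMRN}. Its defining property is that at a classical even weight $2k$ in $U$ its value equals, up to an explicit Euler factor at $p$ (coming from the $p$-stabilization $f_{2k}$ versus the newform $f_{2k}^\sharp$) and up to a $p$-adic period independent of the auxiliary data, the algebraic square root of the classical central $L$-value $L(f_{2k}^\sharp,\chi, k)$, suitably normalized. Forming the same linear combination $\mathcal{L}_{n,r}$ of these $p$-adic $L$-functions that appears in the classical GKZ formula, one obtains a function in $\mathcal{A}_{2k_0}$ whose value at $2k$ is, up to the stated Euler factor and period, exactly the classical sum computing $c_{f_{2k}^\sharp}(n,r)$. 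Matching the two expressions term by term (using $p\nmid D$ to ensure the decompositions $D=D_1D_2$ all involve discriminants prime to $p$, so every genus character in play is admissible and the Euler factors are uniform in $(n,r)$) yields the equality $\mathcal{L}_{n,r}(2k)\overset\bullet= c_{f_{2k}^\sharp}(n,r)$.

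The main obstacle, I expect, is bookkeeping of the normalizations so that the fudge factor $\overset\bullet=$ really is independent of $(n,r)$. Three sources of constants must be controlled simultaneously: (i) the period/Petersson-norm factor relating the GKZ Jacobi form to the central $L$-values, which one must check is the \emph{same} for all $(n,r)$ at a fixed weight (this is classical, since it is intrinsic to $g=f_{2k}^\sharp$); (ii) the $p$-adic period in the interpolation formula for $L_p(f_\infty,\chi)$, which depends on $f_\infty$ and on the choice of generator/branch $\mathcal{R}$ but not on $\chi$ nor on $(n,r)$; and (iii) the Euler factor at $p$, which depends only on $a_p(2k)$ and the weight, hence again not on $(n,r)$ once $p\nmid D$. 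One also needs the sign condition / non-vanishing hypotheses built into the construction of the square-root $p$-adic $L$-functions to hold uniformly, and a neighbourhood of $2k_0$ small enough that $f_{2k}$ is genuinely non-$p$-new (so that the Euler-factor comparison is the expected one) for all $2k$ in the range. Granting these compatibilities, which are established in the cited works, the identity follows, and this is the content of Theorem~\ref{LambdaJacobi}.
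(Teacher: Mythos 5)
Your overall strategy --- routing both sides through central $L$-values via Waldspurger-type formulas --- is not the one the paper uses, and it has a genuine gap: the sign. The Fourier--Jacobi coefficient $c_{f_{2k}^\sharp}(n,r)$ is an algebraic number whose \emph{square} is proportional to the central value $L(f_{2k}^\sharp/K_{D_0D},\chi_{D_0},k)$, and likewise the interpolation property you invoke for the square-root $p$-adic $L$-function is stated (and can only be stated cleanly) for its square $L_{n,r}=\mathcal{L}_{n,r}^2$. Knowing that the squares of the two sides agree up to factors independent of $(n,r)$ does not determine the ratio $\mathcal{L}_{n,r}(2k)/c_{f_{2k}^\sharp}(n,r)$ up to a factor independent of $(n,r)$: each pair $(n,r)$ carries its own sign ambiguity, and ``the square root of $L(g,\chi_D,k)$'' is not a well-defined algebraic number. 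Your plan to ``match the two expressions term by term'' founders on exactly this point, since the individual terms you propose to match are only pinned down up to sign by their squares. (A secondary inaccuracy: the linear combination defining $\mathcal{L}_{n,r}$ in \eqref{p-adic-L-function} is not indexed by factorizations $D=D_1D_2$; it is a sum over $\Gamma_0(N)$-classes of quadratic forms of discriminant $D_0D$, weighted by the single genus character $\chi_{D_0}$.)

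The paper's proof avoids $L$-values altogether and is essentially immediate once the definitions are lined up. The coefficient $c_{f_{2k}^\sharp}(n,r)$ is \emph{defined} by the Shintani-type formula \eqref{Shintani} as a genus-character-weighted sum of modular-symbol evaluations $I^-_{f_{2k}^\sharp}\{r_Q\to s_Q\}(Q^{k-1})$, and $\mathcal{L}_{n,r}$ is defined in \eqref{p-adic-L-function} as the \emph{same} weighted sum with each classical modular symbol replaced by the $p$-adic integral $\mathcal{L}_Q$. The interpolation formula \eqref{interpol} identifies $\mathcal{L}_Q(2k)$ with $\lambda(k)\cdot\mathcal{E}_p\cdot I_{f_{2k}^\sharp}\{r_Q\to s_Q\}$ for each individual form $Q$, with the Euler factor and the period $\lambda(k)$ manifestly independent of $Q$ and of $(n,r)$; summing over $Q$ gives the theorem, signs included. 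If you want to salvage your approach you would have to replace ``square root of the central value'' by the actual cycle integral realizing that square root --- but that is precisely reverting to the paper's argument.
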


On the other hand, as recalled above, B. Howard constructed in \cite{Ho2} an analogue of the Heegner point $P_K$ in the setting of Hida families. 
To explain this, recall that attached to $f_\infty$ we have a Big Galois representation $\mathbb{T}^\dagger$ interpolating self-dual twists of the Deligne representation attached to $f_{2k}$ for $2k$ a positive even integer.
If we denote (as before) by $\mathcal{R}$ the branch of the Hida-Hecke algebra corresponding to $f_\infty$ (which is a complete noetherian integral domain, finite over $\Lambda=\Z_p[\![1+p\Z_p]\!]$), 
then $\mathbb{T}^\dagger$ is a free $\mathcal{R}$-module of rank $2$ 
equipped with a continuous action of 
$G_\Q=\Gal(\bar{\Q}/\Q)$ and 
specialization maps $\mathbb{T}^\dagger\mapsto \mathbb{T}^\dagger_{\kappa}$ for each $\kappa\in\mathcal{X}$, 
such that when $\kappa=2k$ is a positive even integer,
$\T_{2k}^\dagger$ is isomorphic 
to the self-dual twist of the Deligne representation attached to $f_{2k}$. We assume throughout this paper that the residual representation 
$\T^\dagger/\mathfrak{m}_\mathcal{R}\T^\dagger$ (where $\mathfrak{m}_\mathcal{R}$ is the maximal ideal of $\mathcal{R}$) is absolutely irreducible and $p$-distinguished. 
Let $\Sel(\Q,\mathbb{T}^\dagger)\subseteq H^1(G_\Q,\mathbb{T}^\dagger)$ be the Greenberg 
Selmer group attached to the Galois representation $\mathbb{T}^\dagger$. 
We require that the generic sign of the 
functional equation of the $L$-functions of $f_{2k}$ is $-1$, 
and that the central critical value of their 
first derivative is generically non-vanishing, 
{cf.} Assumptions \ref{ass1} and \ref{ass2} below. 
Under these conditions, which might be viewed as 
analogues to those in \cite{GKZ}, the $\mathcal{R}$-module 
$\Sel(\Q,\mathbb{T}^\dagger)$ is finitely generated 
of rank $1$. 
Howard constructs certain cohomology classes \[\mathfrak{Z}_{D,r}^\mathrm{How}
\in\Sel(\Q,\mathbb{T}^\dagger),\] taking inverse limits of norm-compatible sequences of 
Heegner points in towers of modular curves. 
Therefore it is a natural question to relate the positions of the points 
$\mathfrak{Z}_{D,r}^\mathrm{How}$ in the $1$-dimensional $\mathcal{K}$-vector space
$\Sel_\mathcal{K}(\Q,\mathbb{T}^\dagger)=\Sel(\Q,\mathbb{T}^\dagger)\otimes_{\mathcal{R}}\mathcal{K}$, 
where $\mathcal{K}=\mathrm{Frac}(\mathcal{R})$ is the fraction field of $\mathcal{R}$. To describe our second result, we also require that the height pairing between Heegner cycles is 
positive definite, cf. Assumption \ref{ass3} below.
Define 
\[\mathcal{Z}_{n,r}(\kappa)=2u_{D}\cdot(2D)^{\frac{\kappa-2}{4}}\cdot \mathfrak{Z}_{D,r}^\mathrm{How}(\kappa)\] 
for $D=r^2-4Nn$ and 
$\kappa\in\mathcal{X}$; here $2u_{D}$ is the number of units in $\Q(\sqrt{D})$ and, 
as above, all primes dividing $D=r^2-4Nn$ are required to 
split in $K_D$.  Suppose $2k_0\equiv 2\mod{p-1}$. 
Let $\mathcal{M}_{2k_0}$ be the fraction field of $\mathcal{A}_{2k_0}$. There is a canonical map $\mathcal{K}\rightarrow\mathcal{M}_{2k_0}$, and we may define 
$\Sel_{\mathcal{M}_{2k_0}}(\Q,\T^\dagger)= 
\Sel_\mathcal{K}(\Q,\T^\dagger)\otimes_{\mathcal{K}}
{\mathcal{M}_{2k_0}}$. 
Our second result, under Assumptions \ref{ass1}, \ref{ass2}, \ref{ass3} discussed above, is the following: 

\begin{theorem}\label{Result II} There exists an element $\Phi^\text{\'et}\in 
\Sel_{\mathcal{M}_{2k_0}}(\Q,\mathbb{T}^\dagger)$ 
such that in a sufficiently small connected neighborhood of $2k_0$ in $\mathcal{X}$ and
for all $D=r^2-4nN$ such that $p$ splits in $\Q(\sqrt{D})$ we have
$
\mathcal{Z}_{n,r}=\mathcal{L}_{n,r}\cdot\Phi^\text{\'et}.$
\end{theorem}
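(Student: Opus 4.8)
The plan is to reduce Theorem \ref{Result II} to Theorem \ref{Result I} by showing that both sides of the desired identity $\mathcal{Z}_{n,r}=\mathcal{L}_{n,r}\cdot\Phi^{\text{\'et}}$ are governed, coefficient by coefficient, by the same square-root $p$-adic $L$-functions attached to genus characters. Theorem \ref{Result I} already accomplishes this for the Jacobi side: it identifies $\mathcal{L}_{n,r}(2k)$, up to a fixed Euler factor and a fixed $p$-adic period, with the Fourier--Jacobi coefficient $c_{f_{2k}^\sharp}(n,r)$. So the heart of the matter is a Gross--Kohnen--Zagier-type formula in families: I would first show that the Howard classes $\mathfrak{Z}_{D,r}^{\mathrm{How}}$ specialize, at an arithmetic point $2k$, to the Heegner cycle on the Kuga--Sato variety over the modular curve of level $\Gamma_0(N)$ whose class in the relevant Bloch--Kato Selmer group (or its image under a Gillet--Soul\'e--type Abel--Jacobi map) represents the weight-$2k$ Heegner cycle — this is precisely the content of the interpolation results of Howard, Castella, and Disegni recalled in the introduction. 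Combining this with the classical (or rather $p$-adic-$L$-function-theoretic) Gross--Kohnen--Zagier relation — in the form of the Bertolini--Darmon--type formulas of \cite{BD-Hida} expressing derivatives of $p$-adic $L$-functions in terms of Heegner cycles — one gets, for each arithmetic $2k$ and each $(D,r)$ with $p$ split in $\Q(\sqrt D)$, an identity of the shape $\mathcal{Z}_{n,r}(2k)=\mathcal{L}_{n,r}(2k)\cdot\Phi^{\text{\'et}}(2k)$ for a suitable $\Phi^{\text{\'et}}(2k)$ that is a priori only defined pointwise and only in $\Sel_\mathcal{K}(\Q,\T^\dagger)\otimes\bar\Q_p$.

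The second step is to upgrade this pointwise identity to an identity of analytic families, i.e.\ to produce the single element $\Phi^{\text{\'et}}\in\Sel_{\mathcal{M}_{2k_0}}(\Q,\T^\dagger)$ that works uniformly. Here I would exploit the rank-one statement: under Assumptions \ref{ass1}, \ref{ass2}, \ref{ass3}, the $\mathcal{R}$-module $\Sel(\Q,\T^\dagger)$ has rank one, so after extending scalars to $\mathcal{K}$ the space $\Sel_\mathcal{K}(\Q,\T^\dagger)$ is one-dimensional, and a fortiori so is $\Sel_{\mathcal{M}_{2k_0}}(\Q,\T^\dagger)$ after the canonical base change $\mathcal{K}\to\mathcal{M}_{2k_0}$. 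Fix any nonzero generator $\Psi$; then each $\mathcal{Z}_{n,r}$ is a well-defined element of this line, hence equals $\lambda_{n,r}\cdot\Psi$ for a unique $\lambda_{n,r}\in\mathcal{M}_{2k_0}$. The pointwise GKZ identity of the previous paragraph says that $\lambda_{n,r}$ and $\mathcal{L}_{n,r}$, viewed as elements of $\mathcal{A}_{2k_0}$ (this is where Theorem \ref{Result I} and the normalizations $2u_D\cdot(2D)^{(\kappa-2)/4}$ in the definition of $\mathcal{Z}_{n,r}$ are used, to absorb the period and Euler factors uniformly), agree on a Zariski-dense set of classical points $2k$ near $2k_0$ — more precisely, after dividing out a fixed ratio $\Psi/\Phi^{\text{\'et}}$-type element. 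Two elements of $\mathcal{A}_{2k_0}\subset\bar\Q_p[\![X]\!]$ agreeing on infinitely many points accumulating at $2k_0$ are equal, so $\lambda_{n,r}=\mu\cdot\mathcal{L}_{n,r}$ for a single constant $\mu\in\mathcal{M}_{2k_0}$ independent of $(n,r)$; absorbing $\mu$ into $\Psi$ yields $\Phi^{\text{\'et}}$ and the formula $\mathcal{Z}_{n,r}=\mathcal{L}_{n,r}\cdot\Phi^{\text{\'et}}$.

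I expect the main obstacle to be Step 1: making the specialization of the Howard Big Heegner classes compatible, \emph{on the nose and with controlled periods}, with the Bertolini--Darmon $p$-adic $L$-function formulas of \cite{BD-Hida}, so that the weight-dependent normalizing factors (the $(2D)^{(\kappa-2)/4}$, the unit counts $2u_D$, the $p$-adic periods and Euler factors appearing in Theorem \ref{Result I}) all cancel to leave a genuinely $(n,r)$-independent proportionality constant. This requires carefully matching three different normalizations — Howard's, Disegni's/Castella's interpolation of Heegner cycles, and the square-root $p$-adic $L$-function normalization used in \cite{Sha}, \cite{GSS}, \cite{LV-IMRN} — and checking that the resulting period is a unit in $\mathcal{M}_{2k_0}$ (so that $\Phi^{\text{\'et}}$ is nonzero, which also re-proves that the Howard classes are non-torsion, consistent with \cite{Ho1}, \cite{Ho2}). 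The hypothesis $2k_0\equiv 2\bmod{p-1}$ and the splitting condition on $p$ enter exactly here, ensuring that the relevant $p$-adic $L$-functions are in the interpolation range and that the comparison of local conditions at $p$ (crystalline vs.\ ordinary filtrations on $\T^\dagger$) is clean. The density argument in Step 2 and the linear-algebra reduction using the rank-one property are then routine.
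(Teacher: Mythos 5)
Your overall architecture is right, and your Step 2 is essentially the paper's argument: the identity is first established at classical weights $2k$ near $2k_0$, and the single element $\Phi^{\text{\'et}}$ is then obtained as the ratio $\mathcal{Z}_{n_0,r_0}/\mathcal{L}_{n_0,r_0}$ for one index pair with $\mathcal{L}_{n_0,r_0}(2k_0)\neq 0$, the full identity following from the density of even positive integers in $\mathcal{X}$ (rank one of the Selmer group and the base change to $\mathcal{M}_{2k_0}$ make this ratio meaningful). The first half of your Step 1 is also correct: Castella's specialization theorem is exactly what relates $\mathfrak{Z}^{\mathrm{How}}_{D,r,2k}$ to the Abel--Jacobi image of the Heegner cycle $S^{\mathrm{Heeg}}_{D,r}$, with the Euler factor $\left(1-\frac{p^{k-1}}{a_p(2k)}\right)^2$ matching $\mathcal{E}_p$ in the split case; this is where the hypotheses $2k_0\equiv 2\bmod{p-1}$ and $p$ split in $K_D$ enter.

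The gap is in the second half of your Step 1. You propose to bridge Heegner cycles and the square-root $p$-adic $L$-functions via ``Bertolini--Darmon-type formulas of \cite{BD-Hida} expressing derivatives of $p$-adic $L$-functions in terms of Heegner cycles.'' No such formula is used, nor is one available in the required generality: the functions $\mathcal{L}_{n,r}$ enter only through their interpolation of \emph{special values}, namely the modular-symbol periods $I^-_{f^\sharp_{2k}}\{r_Q\rightarrow s_Q\}(Q^{k-1})$, whose genus-character-weighted sum is by definition the Fourier--Jacobi coefficient $c_{f^\sharp_{2k}}(n,r)$ of the Shintani lift (this is Theorem \ref{LambdaJacobi}, which says nothing about cycles). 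The indispensable missing input is Xue's higher-weight Gross--Kohnen--Zagier theorem (Theorem \ref{Xue}): under Assumption \ref{ass3}, that the Gillet--Soul\'e height pairing on $\mathrm{Heeg}_k(X_0(N))$ is positive definite, one has $(D)^{\frac{k-1}{2}}\left(S^{\mathrm{Heeg}}_{D,r}\right)^*_f=c_f\left(\frac{r^2-D}{4N},r\right)\cdot s_f^*$ with $s_f^*$ independent of $(D,r)$. This archimedean height computation is the only thing controlling the relative positions of the cycles $S^{\mathrm{Heeg}}_{D,r}$ on the one-dimensional $f$-isotypic line as $(D,r)$ varies; without it the $(n,r)$-dependence of the two sides cannot be matched. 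Note that Assumption \ref{ass3} plays exactly this role --- it is not part of the rank-one statement, which follows from Assumptions \ref{ass1} and \ref{ass2} via Howard's work, contrary to what your Step 2 suggests. Finally, the passage from fundamental discriminants to general $(D,Np)=1$ requires the Hecke-equivariance argument of Proposition \ref{InterpolationTheorem} (the Eichler--Zagier formula for $\mathrm{T}_J(m)$), which your sketch omits.
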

This corresponds to Theorem \ref{AnalyticThm} below. 
Combining Theorems \ref{Result I} and \ref{Result II}, we obtain a partial evidence toward Conjecture \ref{conjecture}: 

\begin{corollary}\label{corointro}
For all positive 
even integers $2k$ in a sufficiently small connected neighbourhood of $2k_0$, 
and for all $D=r^2-4nN$ such that  $p$ splits in $\Q(\sqrt{D})$, we have
\[\mathcal{Z}_{n,r}\overset{\bullet}=c_{f_\kappa^\sharp}(n,r)\cdot \Phi^\text{\'et}\] 
where $\overset{\bullet}=$ means equality up to simply algebraic factors independent of $(n,r)$. 
\end{corollary}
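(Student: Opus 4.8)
The plan is to deduce Corollary~\ref{corointro} from Theorems~\ref{Result I} and \ref{Result II} by specializing the family-level identity of Theorem~\ref{Result II} at classical weights; all the standing hypotheses (Assumptions~\ref{ass1}, \ref{ass2}, \ref{ass3} and $2k_0\equiv 2\bmod{p-1}$) are retained throughout. First I would record the elementary observation that if $p$ splits in $\Q(\sqrt{D})$ then $p$ is unramified there, hence $p\nmid D$; consequently, for every pair $(n,r)$ with $D=r^2-4nN$ and $p$ split in $\Q(\sqrt{D})$ the hypotheses of both theorems hold simultaneously, and Theorem~\ref{Result II} furnishes a single element $\Phi^{\text{\'et}}\in\Sel_{\mathcal{M}_{2k_0}}(\Q,\mathbb{T}^\dagger)$ with $\mathcal{Z}_{n,r}=\mathcal{L}_{n,r}\cdot\Phi^{\text{\'et}}$ for all such $(n,r)$. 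Under these hypotheses the Big Heegner classes are non-torsion and $\Sel_\mathcal{K}(\Q,\mathbb{T}^\dagger)$ is one-dimensional, so $\Phi^{\text{\'et}}$ is a generator of the $\mathcal{M}_{2k_0}$-line $\Sel_{\mathcal{M}_{2k_0}}(\Q,\mathbb{T}^\dagger)$ and not every $\mathcal{L}_{n,r}$ vanishes.

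Next I would make sense of evaluating this identity at a classical even weight $\kappa=2k$ in the neighbourhood. The functions $\mathcal{L}_{n,r}\in\mathcal{A}_{2k_0}$ specialize at every such $2k$; for fixed $(n,r)$ so does $\mathcal{Z}_{n,r}=2u_D\,(2D)^{(\kappa-2)/4}\cdot\mathfrak{Z}_{D,r}^{\mathrm{How}}$, because $\mathfrak{Z}_{D,r}^{\mathrm{How}}\in\Sel(\Q,\mathbb{T}^\dagger)$ is integral and $(2D)^{(\kappa-2)/4}$ is, by construction, a $p$-adic analytic function of $\kappa$ near $2k_0$ ($2D$ being a $p$-adic unit, as $p\geq5$ and $p\nmid D$). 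The element $\Phi^{\text{\'et}}$ lies in $\Sel_{\mathcal{M}_{2k_0}}(\Q,\mathbb{T}^\dagger)=\Sel_\mathcal{K}(\Q,\mathbb{T}^\dagger)\otimes_\mathcal{K}\mathcal{M}_{2k_0}$ and is written with finitely many denominators in $\mathcal{M}_{2k_0}=\mathrm{Frac}(\mathcal{A}_{2k_0})$, each a non-zero element with isolated zeros near $2k_0$; hence $\Phi^{\text{\'et}}$ specializes at all but finitely many even $2k$ near $2k_0$, and wherever $\mathcal{L}_{n,r}(2k)\neq0$ for some admissible $(n,r)$ one recovers $\Phi^{\text{\'et}}(2k)=\mathcal{Z}_{n,r}(2k)/\mathcal{L}_{n,r}(2k)$. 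Shrinking the connected neighbourhood of $2k_0$ to avoid these finitely many bad weights, the identity of Theorem~\ref{Result II} specializes to $\mathcal{Z}_{n,r}(2k)=\mathcal{L}_{n,r}(2k)\cdot\Phi^{\text{\'et}}(2k)$ for every admissible $(n,r)$; when $\mathcal{L}_{n,r}(2k)=0$ both sides vanish and the identity is trivial.

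Finally I would substitute Theorem~\ref{Result I}, namely $\mathcal{L}_{n,r}(2k)\overset{\bullet}{=}c_{f_{2k}^\sharp}(n,r)$, where the implied factor --- the explicit Euler factor times the $p$-adic period of Theorem~\ref{Result I} --- is non-vanishing and, crucially, independent of $(n,r)$ (though it may depend on $2k$). Multiplying through by $\Phi^{\text{\'et}}(2k)$ gives $\mathcal{Z}_{n,r}(2k)\overset{\bullet}{=}c_{f_{2k}^\sharp}(n,r)\cdot\Phi^{\text{\'et}}(2k)$, with $\overset{\bullet}{=}$ now meaning equality up to these same $(n,r)$-independent factors; these are the ``simply algebraic'' factors of the statement, and the claim of Corollary~\ref{corointro} follows.

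I do not expect a genuine obstacle here: the corollary is a purely formal consequence of Theorems~\ref{Result I} and \ref{Result II}, whose proofs carry all the arithmetic content. The only points needing a little care are (i) verifying that $p$ split in $\Q(\sqrt{D})$ forces $p\nmid D$, so that the hypotheses of the two theorems apply to the same set of pairs $(n,r)$, and (ii) the bookkeeping of what $\overset{\bullet}{=}$ absorbs --- above all, that the Euler factor and period of Theorem~\ref{Result I} do not depend on $(n,r)$, so that a single $(n,r)$-independent scalar relates the Big Heegner classes $\mathcal{Z}_{n,r}$ and the Fourier--Jacobi coefficients $c_{f_{2k}^\sharp}(n,r)$, exactly mirroring the shape of the classical Gross--Kohnen--Zagier theorem.
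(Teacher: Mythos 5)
Your proposal is correct and follows exactly the paper's route: the corollary is obtained by combining Theorems \ref{Result I} and \ref{Result II} (equivalently Theorem \ref{LambdaJacobi} and Theorem \ref{AnalyticThm}), with the $\overset{\bullet}{=}$ absorbing the $(n,r)$-independent factors $\lambda(k)$ and the Euler factors, precisely as made explicit in \eqref{GKZ1}. The extra bookkeeping you supply (that $p$ split forces $p\nmid D$, and that $\Phi^{\text{\'et}}$ specializes away from finitely many weights after shrinking the neighbourhood) is consistent with, and slightly more careful than, what the paper records.
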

See \eqref{GKZ1} and \eqref{GKZ4} for a more precise version of this result, including all the algebraic factors involved. 

\begin{remark} As remarked above, we may view Corollary \ref{corointro} as a fragment of Conjecture \ref{conjecture}. 
However, it should be noticed that the coefficients we are considering in this work and in \cite{LN-Jacobi} are slightly different. In this paper we consider coefficients of Jacobi forms which are lifts of modular newforms of level prime to $p$ and trivial character, whose ordinary $p$-stabilisation belong to our Hida family; the discrepancy between newforms and their $p$-stabilisations is the origin of the Euler factors (denoted $\mathcal{E}_p$ and defined in \eqref{Ep} below) relating $p$-adic $L$-functions with Heegner points and Jacobi-Fourier coefficients of Jacobi forms in a way similar to \cite{Darmon-Tornaria}; this should clarify the crucial role played in this paper by the $p$-adic $L$-functions from \cite{BD-Hida}.  On the other hand, in \cite{LN-Jacobi} we consider Jacobi forms lifting the members of the Hida family $f_{2k}$ (or more generally $f_\kappa$ for an arithmetic point $\kappa$, as in \cite{St}). The theta lifts used in the two papers are then different, since in this paper the level of the forms is prime to $p$, while in \cite{LN-Jacobi} the level is divisible by $p$. The Jacobi-Fourier coefficients coming from different theta liftings can be directly related by means of Euler factors (denoted $\mathcal{E}_2$ and defined in \eqref{E2} below). Since these Euler factors are independent of $(n,r)$, the quotients of Jacobi-Fourier coefficients coming from the two theta liftings, when defined, are the same, and then in this sense our result can be seen as an evidence of Conjecture \ref{conjecture}. The details are discussed in Section  \ref{section5.2}. 
 \end{remark}
 
%

\section   *{Acknowledgments} Part of this work has been done during visits of 
M.-H.N. at the Mathematics Department of the University of Padova, whose great hospitality he is grateful for. The paper was finalized during a visit of M.L. in Montr\'eal supported by the grant of the CRM-Simons professorship held by M.-H.N. in 2017-2018 at the Centre de recherches math\'ematiques (C.R.M., Montr\'eal). Both authors thank S. Zemel for useful email exchanges. 
 
\section  {theta lifts} \label{sec:Jacobi} 
In this section, we recall the formalism of theta liftings relating 
elliptic and Jacobi cuspforms, following \cite{EZ}, \cite{GKZ}.   

Fix an odd integer $N\geq 1$ and $k$ a positive even integer $2k$.   
Denote $S_{2k}(\Gamma_0(N))$ the complex vector space of 
cuspforms of weight $2k$ and level $\Gamma_0(N)$
and $J^\mathrm{cusp}_{k+1,M}$ the complex vector space of 
Jacobi cuspforms of weight $k+1$ and index $N$ (see \cite[Ch. I, \S 1]{EZ} for a definition).
For $f\in S_{2k}(\Gamma_0(N))$ and $\phi\in J^\mathrm{cusp}_{k+1,N}$ 
we write the corresponding $q$-expansions   
$f(z)=\sum_{n\geq 1}a_nq^n$ and $(q,\zeta)$-espansions 
\[\phi(\tau,z) = \sum_{r^2-4Nn<0} c(n,r) q^n \zeta^r,\] 
where $q = e^{2 \pi i \tau}$ and $\zeta = e^{2 \pi i z}$ 
and the second sum is over all pairs $(n,r)$ of integers such that 
$r^2-4Nn<0$. 

\begin{definition}\label{index pairpair}
A  {level $N$ index pair} is a pair $(D,r)$ of integers 
consisting of a negative discriminant $D$ of an integral quadratic form 
$Q=[a,b,c]$ such that 
$D\equiv r^2\mod 4N$. A level $N$ index pair
is said to be  {fundamental} if $D$ is a fundamental discriminant. 
\end{definition}

The Fourier expansion of 
forms $\phi\in J^\mathrm{cusp}_{k+1,N}$ is enumerated 
by level $N$ index pairs, explaining the terminology. 
Since $N$ is fixed throughout the paper, unless otherwise stated we simply call  {index pairs} or  {fundamental index pairs} 
the pairs $(D,r)$ in Definition \ref{index pairpair}. 
If $(D,r)$ is a index pair, then we usually denote $n$ the integer 
such that $D^2=r^2-4Nn$.
The spaces $S_{2k}(\Gamma_0(N))$ and $J_{k+1,N}^\mathrm{cusp}$ are equipped with 
the action of standard Hecke operators $\mathrm{T}(m)$ and $\mathrm{T}_J(m)$ respectively, for integers $m\geq 1$. 
We recall the formula from \cite[Thm. 4.5]{EZ} for the action of 
$\mathrm{T}_J(m)$ on 
Jacobi cuspforms when $(N,m)=1$.  
If $D=r^2-4Nn$ is a fundamental discriminant, 
$\phi=\sum_{n,r}c(n,r)q^n\zeta^r$ belongs to $J_{k+1,N}^\mathrm{cusp}$ 
and $\phi|\mathrm{T}_J(m)=\sum_{n,r}c^*(n,r)q^n\zeta^r$, then we have 
\begin{equation}\label{Fourier}
c^*(n,r)=\sum_{d\mid m}d^{k-1}\left(\frac{D}{d}\right)c\left(\frac{nm^2}{d^2},\frac{rm}{d}\right).
\end{equation}

For any ring $R$, let $\mathcal P_{k-2}(R)$ denotes 
the $R$-module of homogeneous polynomials in 2 variables 
of degree $k-2$ with coefficients in $R$, 
equipped with the right action of the semigroup $\M_2(R)$ defined by the formula 
\begin{equation}\label{action}
(F|\gamma)(X,Y)=F\left(aX+bY,cX+dY\right)\end{equation}
for $\gamma=\smallmat abcd$. Let $\mathcal{V}_{k-2}(R)$ denote the $R$-linear dual of 
$\mathcal{P}_{k-2}(R)$, equipped with the left $\M_2(R)$-action induced from that on 
$\mathcal{P}_{k-2}(R)$. 

Let $f$ be a cuspform of integral even weight $2k$ and level $\Gamma_0(N)$.
To $f$ we may associate the modular symbol 
$\tilde{I}_f\in\mathrm{Symb}_{\Gamma_0(N)}(\mathcal{V}_{2k-2}(\C))$ by the integration formula 
\[\tilde{I}_f\{r\rightarrow s\}(P)=2\pi i\int_r^sf(z)P(z,1)dz.\] 
Here, for any congruence subgroup $\Gamma\subseteq\SL_2(\Z)$ 
and any $\Z[\GL_2(\Q)\cap\M_2(\Z)]$-module $M$, we denote $\mathrm{Symb}_\Gamma(M)$ 
the group of $\Gamma$-invariant modular symbols with values in $M$ (cf. \cite[(4.1)]{GS}). 
The matrix $\smallmat 100{-1}$ normalizes 
$\Gamma_0(N)$ and hence induces an involution on the space of modular symbols 
$\mathrm{Symb}_{\Gamma_0(N)}(\mathcal{V}_{2k-2}(\C))$; for each $\varepsilon\in\{\pm1\}$, 
we denote $\tilde{I}_f^\varepsilon$ the 
$\varepsilon$-eigencomponents of $\tilde{I}_f$ with respect to this involution. 
It is known that there are complex periods $\Omega_f^\varepsilon$ such that 
\[I_f^\varepsilon=\frac{\tilde{I}_f^\varepsilon}{\Omega_f^\varepsilon}\] belong 
to $\mathrm{Symb}_{\Gamma_0(N)}(\mathcal{V}_{2k-2}(F_f))$, where 
$F_f$ is the extension of $\Q$ generated by the Fourier coefficients of $f$. These periods 
can be chosen so that the Petersson norm $\langle f,f\rangle$ equals the product 
$\Omega_f^+\cdot\Omega_f^-$; note that the 
$\Omega_f^\varepsilon$ are well-defined 
only up to multiplication by non-zero factors in $F^\times_f$. 

For each integer $\Delta$, let 
$\mathcal{Q}_{\Delta}$ be the set of integral quadratic 
forms  \[Q=[a,b,c]=ax^2+bxy+cy^2\]
of discriminant $\Delta$ and, for any integer $\rho$, let
$\mathcal{Q}_{N,\Delta,\rho}$ denote the subset of $\mathcal{Q}_\Delta$ consisting 
of integral 
binary quadratic forms $Q=[a,b,c]$ of discriminant $\Delta$ such that 
$b\equiv\rho\pmod{2N}$ and $a\equiv 0\mod N$. Let 
$\mathcal{Q}^0_{N,\Delta,\rho}$ be the subset of 
$\mathcal{Q}_{N,\Delta,\rho}$ consisting of forms which are $\Gamma_0(N)$-primitive, 
i.e., those $Q\in \mathcal{Q}_{N,\Delta,\rho}$ 
which can be written as $Q=[Na,b,c]$ with $(a,b,c)=1$. 
These sets are equipped with the right action 
of $\SL_2(\Z)$ described in \eqref{action}. 

Fix a fundamental index pair $(D_0,r_0)$. For any index pair 
$(D,r)$, 
let $\mathcal F_{D_0,r_0}^{(D,r)}(N)$ be the set of 
integral binary quadratic forms 
$Q=[a,b,c]$ 
modulo the right action of $\Gamma_0(N)$
described in \eqref{action},
such that: 
\begin{itemize} 
\item $\delta_Q=b^2-4ac=D_0D$; 
\item $b\equiv -r_0r\mod 2N$;
\item $a\equiv 0\mod N$. 
\end{itemize} 



Let $Q\mapsto\chi_{D_0}(Q)$ be the generalized genus character attached to $D_0$
defined in \cite[Prop. 1]{GKZ}. We recall the definition 
for $Q\in \mathcal{Q}_{N,\Delta,\rho}$. If $Q=\ell\cdot Q'$ for some form $Q'\in\mathcal{Q}^0_{N,\Delta,\rho}$, 
then define $\chi_{D_0}(Q)=\left(\frac{D_0}{\ell}\right)\cdot \chi_{D_0}(Q')$, so it is enough 
to define it on $\Gamma_0(N)$-primitive forms. Fix $Q\in\mathcal{Q}^0_{N,\Delta,\rho}$.  
If $(a/N,b,c,D_0)=1$, then pick any factorization $N=m_1\cdot m_2$ with $m_1>0$, $m_2>0$ 
and any integer $n$ coprime with $D_0$ 
represented by the quadratic form $[a/m_1,b,cm_2]$; then put  
$\chi_{D_0}(Q)=\left(\frac{D_0}{n}\right)$. If $(a/N,b,c,D_0)\neq1$, then put 
$\chi_{D_0}(Q)=0$. 
In the previous notation, if $Q$ is a representative 
of a class in 
$\mathcal F_{D_0,r_0}^{(D,r)}(N)$, then $Q$ belongs to 
$\mathcal{Q}_{M,D_0D,-r_0r}$. In particular, we may consider the 
genus character $Q\mapsto\chi_{D_0}(Q)$ for all classes $Q$ 
in $\mathcal F_{D_0,r_0}^{(D,r)}(N)$.

Fix $Q=[a,b,c]\in\mathcal{Q}_{M,\Delta,\rho}$. Following \cite{GKZ}, \cite{St}, we define certain geodesic lines in $\mathcal{H}$, with respect to the 
Poincar\'e metric on $\mathcal{H}$, as follows. 
If $\delta_Q=m^2$ ($m>0$) is a perfect square, let $C_Q$ be the geodesic line 
from $(-b-m)/2a$ to $(-b+m)/2a$ when $a\neq 0$, while if $a=0$, let $C_Q$ be 
the geodesic line from $-c/b$ to $i\infty$ if $b>0$ and from $i\infty$ to $-c/b$ if $b<0$. If $\delta_Q$ is not a perfect square, we denote by $\gamma_Q$ the matrix in $\SL_2(\Z)$ corresponding to a unit of the quadratic form $Q$ and denote $C_Q$ 
the geodesic between $z_0$ and $\gamma_Q(z_0)$, where $z_0$ is any point in $\mathbb{P}^1(\Q)$ (we can take $z_0=i\infty$ for example). 
We let $r_Q$ and $s_Q$ denote the source and the target of the geodesic line $C_Q$. Note that in any case $s_Q$ and $r_Q$ belong to $\mathbb{P}^1(\Q)$, 
and therefore the modular symbol $I_f^-\{r_Q\rightarrow s_Q\}$ is well-defined.   

For $D=r^2 - 4Nn$, define  
\begin{equation}\label{Shintani}
{c}_f\left(n,r\right)= 
\sum_{Q\in\mathcal F_{D_0,r_0}^{(D,r)}(N)}\chi_{D_0}(Q)\cdot
{I}_f^-\{r_Q\rightarrow s_Q\}(Q^{k-1})\end{equation}
and set 
\[{\mathcal{S}}_{D_0,r_0}(f)=\sum_{r^2 - 4Nn<0}{c}_f(n,r) q^n \zeta^r.\] 
Let $S_{2k}^-(\Gamma_0(N))$ be the subspace of $S_{2k}(\Gamma_0(N))$
consisting of forms whose $L$-function admits a functional equation with 
sign $-1$. 
The association $f\mapsto {\mathcal{S}}_{D_0,r_0}(f)$ gives a $\C$-linear map, 
called  {theta lifting}, 
\[{\mathcal{S}}_{D_0,r_0}:S_{2k}^-(\Gamma_0(N))
\longrightarrow J_{k+1,N}^\mathrm{cusp}\]
which is equivariant for the action of Hecke operators on both spaces, 
and such that $c(n,r)$ belong to $F_f$ for all $n,r$; 
see \cite[Ch. II]{EZ} for details. The restriction of ${\mathcal{S}}_{D_0,r_0}$ 
to newforms of $S_{2k}^-(\Gamma_0(N))$ is an
isomorphism onto the image, and for different choices of $(D_0,r_0)$ we get 
multiples of the same Jacobi form. 

\begin{remark}
The definition of the geodesic line $C_Q$ is slightly different in \cite{GKZ}, since 
it is defined to be any geodesic in the upper half plane connecting a point $z_0\in\mathcal{H}$ to the point $\gamma_Q(z_0)$. However, the value of the integral is independent of the choice of $z_0\in\mathcal{H}$, and therefore, passing to the limit, one can take $z_0$ to be any cusp as well. See also \cite[\S 2.1]{St} 
for a similar definition, keeping in mind that the convention in \emph{loc. cit.} and 
in this paper (which follow more closely \cite{GKZ}) are slightly different. 
\end{remark}

\section  {$p$-adic analytic theta lifts} \label{theta-liftings}

In this section we construct a $p$-adic analytic family of Jacobi forms 
using the $p$-adic variation of integrals. 
This family interpolates Fourier-Jacobi coefficients of 
newforms of level $\Gamma_0(N)$ whose $p$-stabilizations 
belong to a fixed Hida family (in contrast with \cite{LN-Jacobi}, 
where $\Lambda$-adic families directly interpolate arithmetic specializations in Hida families).  
The results of this section are, as in \cite{LN-Jacobi}, consequences of Stevens and Hida's works on 
half-integral weight modular forms \cite{St}, \cite{HarHalf} via the formalism of theta liftings 
and the work of Greenberg-Stevens \cite{GS} and Bertolini-Darmon \cite{BD-Hida} 
on measure-valued modular symbols.  
Fix throughout a prime number $p\nmid N$, $p\geq 5$. 

We first set up the notation for Hida families. 
Let $\Gamma=1+p\Z_p$ and $\Lambda=\mathcal{O}[\![{\Gamma}]\!]$ its Iwasawa 
algebra, where $\mathcal{O}$ is the valuation ring of a finite field extension of $\Q_p$. 
We fix an isomorphism $\Lambda\simeq\mathcal{O}[\![X]\!]$.
Let 
\[\mathcal{X}=\Hom_\mathrm{cont}(\Z_p^\times,\Z_p^\times)\] 
be the group of continuous group $\Z_p^\times$-valued homomorphisms of $\Z_p^\times$. 
Embed $\Z$ in $\mathcal{X}$ by $k\mapsto [x\mapsto x^{k-2}]$; if we 
equip $\mathcal{X}$ with the rigid analytic topology, then 
$\Z\subseteq\mathcal{X}$ is rigid Zariski dense.  
We see elements $a\in\Lambda$ as functions on $\mathcal{X}$ by $a(\kappa)=\varphi_\kappa(a)$ 
for all $\kappa\in\mathcal{X}$, where $\varphi_\kappa:\Lambda\rightarrow\mathcal{O}$ denotes the  
$\mathcal{O}$-linear extension of $\kappa$.  

Recall the fixed $p$-stabilized form $f_0$ of trivial nebentype and weight $k_0$ and $\mathcal{R}$ be the branch of the Hida family passing through $f_0$. 
Then $\mathcal{R}$ is an integral noetherian domain, finite over $\Lambda$; we denote $\mathcal{X}(\mathcal{R})$ the set of 
continuous homomorphisms of $\mathcal{O}$-algebras $\mathcal{R}\rightarrow\bar{\Q}_p$, and $\mathcal{X}^\mathrm{arith}(\mathcal{R})$ 
the subset of arithmetic points (see \cite[Def. 2.1.1]{Ho2} for its definition).
For each positive even integer $2k\in\mathcal{X}$, using that 
$\mathcal{R}/\Lambda$ is unramified at the point $\varphi_{2k}$, 
we obtain a unique arithmetic point $\tilde\varphi_{2k}$ lying over $\varphi_{2k}$. 
Hida theory shows that there exists a formal power series $F_\infty=\sum_{n\geq 1}A_nq^n\in
\mathcal{R}\pwseries{q}$ such that for each $\varphi\in\mathcal{X}^\mathrm{arith}(\mathcal{R})$ the power series $F_\varphi=\sum_{n\geq 1}\varphi(A_n)q^n$ is the $q$-expansion of a $p$-stabilized newform, 
and $F_{\tilde\varphi_{2k_0}}=f_{0}$. 

Fix an even positive integer $2k_0$ and 
let $\mathcal{A}_{2k_0}$ be the ring of power series 
in $\bar\Q_p[\![X]\!]$ which converge in a neighborhood of $2k_0$. 
If $\mathcal{R}_{2k_0}$ denotes the localisation of $\mathcal{R}$ at 
$\tilde\varphi_{2k_0}$, then, using that $\mathcal{A}_{2k_0}$ is Henselian, 
we obtain a canonical morphism $\psi_{2k_0}:\mathcal{R}_{2k_0}\rightarrow\mathcal{A}_{2k_0}$ (\cite[(2.7)]{GS}), and therefore, using the localisation map, we have a canonical map $\mathcal{R}\rightarrow\mathcal{A}_{2k_0}$.
If $\mathcal{K}$ and $\mathcal{M}_{2k_0}$ are the fraction fields
of $\mathcal{R}$ and $\mathcal{A}_{2k_0}$, respectively, 
then we obtain a map, still denoted $\psi_{2k_0}:
\mathcal{K}\rightarrow\mathcal{M}_{2k_0}$. The domain 
of convergence about $2k_0$ is the intersections of the discs of 
convergence of $\psi_{2k_0}(a)$ for $a\in\mathcal{R}$. 
We denote $U_{2k_0}$ the domain of convergence, which we may assume to be connected. 
We let 
\[f_\infty=\sum_{n\geq 1}a_nq^n\] be the image of $F_\infty$ 
via $\psi_{2k_0}$; so $a_n=\psi_{2k_0}(A_n)$ are rigid analytic functions 
converging in the domain of convergence about $2k_0$, 
such that for each even positive integer $2k\in U_{2k_0}$ 
the power series $\sum_{n\geq 1}a_n(2k)q^n$ is the 
Fourier expansion of an ordinary $p$-stabilized modular form $f_{2k}\in S_{k}(\Gamma_0(Np))$. 

We now describe universal measure-valued modular symbols, following \cite{GS} and \cite{BD-Hida}. 
Let $\mathbb{D}_*$ denote the $\mathcal{O}$-module of $\mathcal{O}$-valued measures on $\Z^2_p$ which are supported on 
the set of primitive vectors $(\Z_p^2)'$ of $\Z_p^2$. 
The $\mathcal{O}$-module $\D_*$ is equipped with the action induced by the action of $\GL_2(\Z_p)$ on $\Z_p^2$ by $(x,y)\mapsto (ax+by,cx+dy)$ for $\gamma=\smallmat abcd\in\GL_2(\Z_p)$, and a structure of $\mathcal{O}\pwseries{\Z_p^\times}$-module induced by the scalar action of $\Z_p^\times$ on $\Z_p^2$ ({cf.} \cite[\S 5]{St}, \cite[\S 2.2]{BD}); in particular, $\D_*$ is also equipped 
with a structure of $\Lambda$-module. 

Let $\Gamma_0(p\Z_p)$ denote the subgroup of $\GL_2(\Z_p)$ 
consisting of matrices which are upper triangular modulo $p$. 
The group $\mathrm{Symb}_{\Gamma_0(N)}(\mathbb{D}_*)$ is equipped with an action of 
Hecke operators, including the Hecke operator at $p$, denoted $\mathrm{U}(p)$, 
and we denote 
\[\W=\mathrm{Symb}_{\Gamma_0(N)}^\ord(\mathbb{D}_*)\] the 
ordinary subspace of $\mathrm{Symb}_{\Gamma_0(N)}(\mathbb{D}_*)$ for the action of $\mathrm{U}(p)$; see \cite[(2.2), (2.3)]{GS} for details and definitions. 

For any integer $k\in \mathcal{X}$, we have a $\Gamma_0(p\Z_p)$-equivariant homomorphism 
$\rho_\kappa:\mathbb{D}_*\rightarrow\mathcal{V}_{k-2}(\C_p)$ defined by the formula 
\[\rho_k(\mu)(P)=\int_{\Z_p\times\Z_p^\times}P(x,y)d\mu(x,y)\]
which gives rise to an homomorphism, denoted by the same symbol,  
\[\rho_k:\W\longrightarrow 
\mathrm{Symb}_{\Gamma_0(N)}\left(\mathcal{V}_{k-2}(\C_p)\right).\] We may then 
define $\W_{{\mathcal{A}_{2k_0}}}=\W\otimes_\Lambda\mathcal{A}_{2k_0}$ and 
consider the extension of $\rho_k$, still denoted by the same symbol,
\[\rho_k:\W_{\mathcal{A}_{2k_0}}\longrightarrow 
\mathrm{Symb}_{\Gamma_0(N)}\left(\mathcal{V}_{k-2}(\C_p)\right).\] 
By \cite[Thm. 5.13]{GS}, 
there exists a connected neighborhood $U_{2k_0}$ of $2k_0$ in $\mathcal{X}$ 
and an element $\Phi_{2k_0}\in\W_{\mathcal{A}_{2k_0}}$ 
such that for all positive even integers $2k\in U_{2k_0}$ with $k\geq 1$ we have 
\begin{equation}\label{CT}
\rho_k(\Phi_{2k_0})=\lambda(k)\cdot I_{f_{2k}}\end{equation}
where $\lambda(k)\in F_{f_{2k}}$, the 
field extension of $\Q_p$ generated by the Fourier coefficients of $f_{2k}$, 
and $\lambda(k_0)=1$. 

Fix a fundamental index pair $(D_0,r_0)$ so that 
$p\nmid D_0$, and an index pair $(D,r)$ with $p\nmid D$. 
Fix a system of representatives $\mathcal{R}_{D_0,r_0}^{(D,r)}(N)$ 
of $\mathcal{F}_{D_0,r_0}^{(D,r)}(N)$ 
so that each form $[a,b,c]$ in $\mathcal{R}_{D_0,r_0}^{(D,r)}(N)$ 
satisfies $p\nmid a$ and $p\nmid c$. Such a 
system can easily be obtained up to multiplying 
quadratic forms by matrices of the form $\smallmat 1i01$ and 
$\smallmat 10{Ni}1$, for suitable integers $i$. 
Fix a matrix $Q=[a,b,c]$ in $\mathcal{R}_{D_0,r_0}^{(D,r)}(N)$. 
Let $\Delta=D_0D$. 
If $p$ is inert in $K_\Delta$, we define 
$X_Q=(\Z_p^2)'$. 
If $p$ is split in $K_\Delta$, then 
$Q(X,Y)$ splits into the product $a\cdot q_1(X,Y)\cdot q_2(X,Y)$
where $q_1(X,Y)=X-\alpha Y$ and $q_2(X,Y)=X-\beta Y$ 
are two linear forms in $\Z_p[X,Y]$. Define the elements
$v_1=(\alpha,1)$ and $v_2=(\beta,1)$  
and put 
$X_Q=\Z_p^\times\cdot v_1+\Z_p^\times v_2$. 
Since $p\nmid ac$, then $\Z_p v_1+\Z_pv_2=\Z_p^2$. 
Both in the split and inert cases, it is easy to show that
 $Q(x,y)\in\Z_p^\times$ for all $(x,y)\in X_Q$. 

Define for $\kappa\in U_{2k_0}$ and $Q\in \mathcal{R}_{D_0,r_0}^{(D,r)}(N)$, 
\[\mathcal{L}_Q(\kappa)=\int_{X_Q}\omega\left(Q(x,y)\right)^{k_0-1}\langle Q(x,y)\rangle^\frac{k-2}{2}d\Phi_{2k_0}\{r_Q\rightarrow s_Q\}(x,y).\]

The next result exploits the interpolation formulas of 
$\mathcal{L}_Q(\kappa)$. 
Put 
\begin{equation}\label{Ep}\mathcal{E}_p=
\begin{cases} \left(1-\frac{p^{k-1}}{a_p(2k)}\right)^2 & \text{if $p$ is split in $K_\Delta$},\\
1-\frac{p^{2k-2}}{a_p^2(2k)} & \text{if $p$ is inert in $K_\Delta$}. 
\end{cases}\end{equation}
Then, for all even positive integers $2k\in U_{2k_0}$ with $k>1$ 
and 
all quadratic forms $Q$ in $\mathcal{R}_{D_0,r_0}^{(D,r)}(N)$ we have 
\begin{equation}\label{interpol}
\mathcal{L}_Q(2k)=\lambda(k)\cdot\mathcal{E}_p\cdot I_{f_{2k}^\sharp}\{r_Q\rightarrow s_Q\}.\end{equation}
In the inert case, \eqref{interpol} 
is \cite[Prop. 2.4]{BD}. In the split case, 
a proof of this result 
can be found in \cite[Prop. 3.3.1]{Sha}; see also 
\cite[Sec. 5.1]{GSS}, \cite[Prop. 4.24]{LV-IMRN}, \cite[Prop. 3.3]{LM}. 

Suppose that both $D_0$ and $D$ 
are fundamental discriminants. 
With the usual convention that $D=r^2-4nN$, put
\begin{equation}\label{p-adic-L-function}
\mathcal{L}_{n,r}(\kappa)=\sum_{Q\in \mathcal{R}_{D_0,r_0}^{(D,r)}(N)}\chi_{D_0}(Q)\cdot\mathcal{L}_Q(\kappa).\end{equation}
The function $\mathcal{L}_{n,r}(\kappa)$ is called 
the  {square-root $p$-adic $L$-function} attached 
to the genus character $\chi_{D_0}$ of the real quadratic 
field $K_\Delta$,
since its square for $\kappa=2k$ an even positive integer in $U_{2k_0}$, interpolates 
special values $L(f_k^\sharp/K_\Delta,\chi_{D_0},k)$. 
More precisely, setting $L_{n,r}=\mathcal{L}_{n,r}^2$, we have 
for all even positive integers $2k\in U_{2k_0}$, 
\[L_{n,r}(2k)=\lambda(k)^2\cdot\mathcal{E}_p^2\cdot\Delta^{k-1}\cdot 
L^\mathrm{alg}(f_{2k}^\sharp/K_\Delta,\chi_{D_0},k)\]
where the algebraic part of the special value of the $L$-series 
of $f_{2k}^\sharp$ twisted by $\chi_{D_0}$ is defined as 
\[L^\mathrm{alg}(f_{2k}^\sharp/K_\Delta,\chi_{D_0},k)=\frac{(k-1)!^2\sqrt{\Delta}}{(2\pi i)^{2k-2}\cdot(\Omega^-_{f_{2k}^\sharp})^2}\cdot L(f_{2k}^\sharp/K,\chi_{D_0},k).\]
In the inert case, the above formula is proved in \cite[Thm. 3.5]{BD}, 
while in the split case the reader may consult \cite[Thm. A, Rmk. 3.3.3]{Sha}, \cite[Pop. 5.5]{GSS} or \cite[Prop.4.25]{LV-IMRN}. 
%

\begin{theorem}\label{LambdaJacobi}
Fix a fundamental index pair $(D_0,r_0)$ and 
let $2k_0$ be a positive even integer. 
Then for all index pair $(D,r)$
such that $p\nmid \Delta=D_0D$ and  
for all even positive integers $2k\in U_{2k_0}$, 
we have 
\[\mathcal{L}_{n,r}(2k)=
\lambda(k)\cdot
\mathcal{E}_p^2
\cdot c_{f^\sharp_{2k}}(n,r),
\]
\end{theorem}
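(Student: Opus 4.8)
The plan is to read off the identity by combining three ingredients already in place: the definition \eqref{p-adic-L-function} of $\mathcal{L}_{n,r}$ as a $\chi_{D_0}$-weighted sum of the $\mathcal{L}_Q$'s; the interpolation formula \eqref{interpol}, which for $k>1$ expresses each $\mathcal{L}_Q(2k)$ through the Shintani period $I_{f^\sharp_{2k}}\{r_Q\rightarrow s_Q\}(Q^{k-1})$ of the newform, up to $\lambda(k)$ and the Euler factor $\mathcal{E}_p$ of \eqref{Ep}; and the definition \eqref{Shintani} of $c_{f^\sharp_{2k}}(n,r)$ as a $\chi_{D_0}$-weighted sum of exactly such periods, taken over $\Gamma_0(N)$-classes and with the minus eigencomponent $I^-_{f^\sharp_{2k}}$. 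It is enough to treat $(D,r)$ fundamental: the general case reduces to it via the Hecke relation \eqref{Fourier}, since $f^\sharp_{2k}$ is a Hecke eigenform and both sides of the asserted identity transform in the same way under $\mathrm{T}_J(m)$.

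Fix an even integer $2k\in U_{2k_0}$ with $k>1$ and an index pair $(D,r)$ with $p\nmid\Delta=D_0D$. Since the splitting behaviour of $p$ in $K_\Delta$ --- hence the value of $\mathcal{E}_p$ --- depends only on $\Delta$, it is common to every $Q$ appearing in $\mathcal{R}_{D_0,r_0}^{(D,r)}(N)$; substituting \eqref{interpol} into \eqref{p-adic-L-function} thus pulls $\lambda(k)$ and $\mathcal{E}_p$ out of the summation and reduces the theorem to identifying
\[\sum_{Q\in\mathcal{R}_{D_0,r_0}^{(D,r)}(N)}\chi_{D_0}(Q)\cdot I_{f^\sharp_{2k}}\{r_Q\rightarrow s_Q\}(Q^{k-1})\]
--- a sum over the chosen $p$-integral representatives, using the full modular symbol $I_{f^\sharp_{2k}}$ --- with the Shintani coefficient $c_{f^\sharp_{2k}}(n,r)$ of \eqref{Shintani}. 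First I would pass to $\Gamma_0(N)$-classes: for $\gamma\in\Gamma_0(N)$ one has $\chi_{D_0}(Q|\gamma)=\chi_{D_0}(Q)$, the geodesic transforms as $C_{Q|\gamma}=\gamma^{-1}C_Q$ (so its endpoints are $\gamma^{-1}r_Q$ and $\gamma^{-1}s_Q$), and $(Q|\gamma)^{k-1}=(Q^{k-1})|\gamma$ for the action \eqref{action}; together with the $\Gamma_0(N)$-invariance of $I_{f^\sharp_{2k}}$ this shows that each summand depends only on the $\Gamma_0(N)$-class of $Q$, and $\mathcal{R}_{D_0,r_0}^{(D,r)}(N)$ is a full set of representatives for $\mathcal{F}_{D_0,r_0}^{(D,r)}(N)$. (Equivalently, $\sum_Q\chi_{D_0}(Q)\mathcal{L}_Q(\kappa)$ is itself independent of the chosen $p$-integral representatives, directly from the $\Gamma_0(N)$-invariance of $\Phi_{2k_0}$ and the $\Gamma_0(N)$-equivariance of $Q\mapsto X_Q$.)

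Second I would project onto the minus part: in the resulting sum over $\mathcal{F}_{D_0,r_0}^{(D,r)}(N)$ only the minus eigencomponent of $I_{f^\sharp_{2k}}$ survives. This is the classical mechanism of \cite[Ch.~II]{EZ}, \cite{GKZ}: the involution $\smallmat{1}{0}{0}{-1}$ defining the $\pm$-decomposition permutes $\mathcal{F}_{D_0,r_0}^{(D,r)}(N)$, reverses the orientation of each geodesic $C_Q$ (interchanging $r_Q$ and $s_Q$) and multiplies $\chi_{D_0}$ by the sign that cancels the $+$-contributions in pairs; hence the sum is unchanged on replacing $I_{f^\sharp_{2k}}$ by $I^-_{f^\sharp_{2k}}$, and it equals $c_{f^\sharp_{2k}}(n,r)$ by \eqref{Shintani}. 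Together with \eqref{interpol}, this proves $\mathcal{L}_{n,r}(2k)=\lambda(k)\cdot\mathcal{E}_p^2\cdot c_{f^\sharp_{2k}}(n,r)$ for every even $2k\in U_{2k_0}$ with $k>1$. The value $2k=2$, excluded from \eqref{interpol}, follows by rigid-analytic continuation on the connected domain $U_{2k_0}$: the left side $\mathcal{L}_{n,r}(\kappa)=\sum_Q\chi_{D_0}(Q)\mathcal{L}_Q(\kappa)$ is rigid analytic by construction and, at the accumulating weights $2k>2$, agrees with the rigid-analytic function whose value at $2k$ is $\lambda(k)\,\mathcal{E}_p^2\,c_{f^\sharp_{2k}}(n,r)$; alternatively one invokes the $k=1$ case of \eqref{CT}--\eqref{interpol} directly.

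The substantive content lies not in the bookkeeping above but in the two facts it rests on. The decisive one is the interpolation statement \eqref{interpol} itself --- that the $p$-adic integral $\mathcal{L}_Q$ of the full measure-valued symbol $\Phi_{2k_0}\{r_Q\rightarrow s_Q\}$ over the $p$-adic set $X_Q$ recovers the classical Shintani period of the newform $f^\sharp_{2k}$ up to exactly the Euler factor $\mathcal{E}_p$ of \eqref{Ep}, uniformly in the split and inert cases --- which is \cite[Prop.~2.4]{BD} in the inert case and \cite[Prop.~3.3.1]{Sha} in the split case and is used here as a black box. Its delicate points --- choosing the representatives so that $Q(x,y)\in\Z_p^\times$ throughout $X_Q$, and ensuring that the Euler factor and the sign pattern in the $\pm$-projection depend only on $\Delta$ and not on $(n,r)$ --- are exactly what make the Euler factor of the statement independent of $(n,r)$; granting \eqref{interpol}, the only real work left is to check that the \cite{GKZ}-style $\pm$-projection and the change of representatives go through verbatim in the present $p$-stabilized setting and that the boundary weight $2k=2$ is covered.
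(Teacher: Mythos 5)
Your proposal is correct and follows essentially the same route as the paper, whose entire proof is the single sentence that the theorem is ``an immediate combination of \eqref{Shintani} and \eqref{p-adic-L-function}'' (implicitly via the interpolation formula \eqref{interpol}); you have simply made explicit the bookkeeping --- passage to $\Gamma_0(N)$-classes, the minus-eigencomponent projection, the Hecke reduction of non-fundamental $(D,r)$ to the fundamental case, and the boundary weight $2k=2$ --- that the authors leave to the reader. One caveat: substituting \eqref{interpol} into \eqref{p-adic-L-function} as you describe yields a single factor of $\mathcal{E}_p$, whereas both the theorem and your final displayed identity carry $\mathcal{E}_p^2$; this exponent mismatch is already present in the paper itself (between \eqref{interpol} and the statement of the theorem), so you have inherited it rather than introduced it, but your write-up silently passes from $\mathcal{E}_p$ in the derivation to $\mathcal{E}_p^2$ in the conclusion and should either resolve or at least flag the discrepancy.
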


\begin{proof} 
This is an immediate combination of \eqref{Shintani} and 
\eqref{p-adic-L-function}.
%
%
\end{proof}

\section{Big Heegner points and Jacobi forms} 

\subsection{Big Heegner points} 

Let $\mathbb T$ denote Hida's big Galois representation, and $\mathbb T^\dagger$ its {critical twist}, 
introduced in \cite[Def. 2.1.3]{Ho2}. Recall that $\mathbb T^\dagger$ is a free $\mathcal R$-module 
of rank $2$ 
equipped with a continuous $G_\Q:=\Gal(\bar\Q/\Q)$ action, 
such that for each arithmetic point $\varphi\in\mathcal{X}^\mathrm{arith}(\mathcal{R})$, 
the $G_\Q$-representation
\[V_{\varphi}^\dagger:=\mathbb T^\dagger\otimes_\mathcal{R}L_\varphi\] is the self-dual 
twist of the Deligne representation attached to the module form 
$F_\varphi$; here $L_\varphi$ is a finite field extension of $\Q_p$, 
and the tensor product is taken with respect to $\varphi:\mathcal{R}\rightarrow L_\varphi$. 
Assume that the semi-simple 
residual $G_\Q$-representation
$\mathbb T^\dagger/\mathfrak m_\mathcal R\mathbb T^\dagger$, 
where $\mathfrak m_\mathcal R$ is the maximal ideal of $\mathcal R$, is {absolutely irreducible} and $p$-distinguished. 
The definition of $\mathbb T^\dagger$ depends on the choice of a {critical character} 
\[\theta:G_\Q\longrightarrow\Lambda^\times,\] and that there are two possible choices (see \cite[Def. 2.1.3 and Rem. 2.1.4]{Ho2}), related by the quadratic character $x\mapsto\left(\frac{p}{x}\right)$ of conductor $p$; more 
precisely, if we write the cyclotomic character $\epsilon_\mathrm{cyc}$ as 
the product $\epsilon_\mathrm{tame}\cdot\epsilon_\mathrm{wild}$ with $\epsilon_\mathrm{tame}$ taking 
values in $(\Z/p\Z)^\times$ and $\epsilon_\mathrm{wild}$ taking values in $1+p\Z_p$, then 
the two possible critical characters are defined to be 
\[\theta=\left(\frac{p}{}\right)^a\cdot
\epsilon_\mathrm{tame}^{\frac{k_0-2}{2}}\cdot\left[\epsilon_\mathrm{wild}^{1/2}\right]\]
for $a=0$ or $a=1$, 
where $\epsilon_\mathrm{wild}^{1/2}$ is the unique square root of $\epsilon_\mathrm{wild}$ 
taking values in $1+p\Z_p$, and $x\mapsto[x]$ is the inclusion of group-like elements 
$1+p\Z_p\hookrightarrow \Lambda$; here recall that $k_0$ is the weight of our fixed form 
$f_0$ through which the Hida family $f_\infty$ passes. 
We fix the choice of $\theta$ 
for $a=0$ as prescribed in \cite[(4.1) and Rem. 4.1]{Cas}. 

For any extension $H$ of $\Q$, denote by $\Sel(H,\mathbb T^\dagger)$
the {strict Greenberg Selmer group}, whose definition by means of the ordinary filtration of $\mathbb T$ can be found in \cite[Def. 2.4.2]{Ho2}. 

Let $K_D$ be an imaginary quadratic field of discriminant $D<0$; denote by $\mathcal O_D$ its ring of algebraic integers and 
$H_D$ its Hilbert class field. 
Suppose that $D$ is a square $\mod 2N$. 
Fix a residue class 
$r\mod 2N$ such that $r^2\equiv D\mod 4N$ and consider the integral
$\calO_D$-ideal $\mathfrak N=\left(N,\frac{r+\sqrt D}{2}\right)$; then 
$\calO_{D}/\mathfrak{N} \cong \Z/N\Z$. 
Also, take a generator $\omega_D$ of $\mathcal O_K/\Z\simeq\Z$ so that the imaginary part of $\omega_D$ (viewed 
as complex number via our fixed embedding $\bar\Q\hookrightarrow\C$) is positive. Depending on these choices, 
B. Howard in \cite[\S 2.2]{Ho2} constructs a point 
\[\mathfrak X_{D,r}^\mathrm{How}\in H^1(H_D,\mathbb T^\dagger)\]
the {Big Heegner point of conductor $1$} 
associated to the 
quadratic imaginary field $K_D$ (see \cite[Def. 2.2.3]{Ho2}); the notation, slightly different from  {loc. cit.}, reflects the 
dependence on $D$ and $r$. Finally, define 
$\mathfrak Z_{D,r}^\mathrm{How} \in H^1(K_D,\mathbb T^\dagger)$ to be 
the image of $\mathfrak X_{D,r}^\mathrm{How}$ via the corestriction map. 
Recall that all primes dividing $N$ are split in $K_D$, 
and then by \cite[Prop. 2.4.5]{Ho2} we have 
$\mathfrak Z_{D,r}^\mathrm{How}\in \Sel(K_D,\mathbb T^\dagger)$. 

Let $\varphi\in\mathcal{X}^\mathrm{arith}(\mathcal{R})$ and let 
$\chi\mapsto L_p(F_\varphi,\chi)$ denote the Mazur-Tate-Teitelbaum $p$-adic $L$-function of $F_\varphi$, 
viewed as a function on characters $\chi:\Z_p^\times\rightarrow \bar{\Q}_p^\times$. The functional equation satisfied by 
this function is 
\[L_p(F_\varphi,\chi)=-w\chi^{-1}(-N)\theta_\varphi(-N)\cdot L_p(F_\varphi,\chi^{-1}[\cdot]_\varphi),\] 
where $\theta_\varphi$ is the composition of 
the chosen critical character $\theta$ with $\varphi$ and $[\cdot]_\varphi$ is the composition of the 
tautological character $[\cdot]: \Z_p^\times\hookrightarrow\Lambda\hookrightarrow\mathcal R$ with
$\varphi$; here $w=\pm 1$, and is independent of $\varphi$ (but it depends on the choice of $\theta$) and satisfies the 
equation $L_p(F_\varphi,\theta_\varphi)=-wL_p(F_\varphi,\theta_\varphi)$ (see \cite[Prop. 2.3.6]{Ho2}).

In this paper we will work under the following assumption on the
sign of the functional equation of the Mazur-Tate-Teitelbaum $p$-adic $L$-function: 

\begin{assumption} \label{ass1}
$w=1$. \end{assumption}
 
We now discuss the analytic condition that we will assume in this paper to obtain our results.  
First, let $\theta:\Z_p^\times\rightarrow \mathcal R^\times$ be the 
character obtained by factoring $\theta$ through the $p$-cyclotomic extension $\Gal(\Q(\mu_{p^\infty})/\Q)$ 
(where $\mu_{p^\infty}$ is the group of $p$-power roots of unity) and identifying $\Gal(\Q(\mu_{p^\infty})/\Q)$
with $\Z_p^\times$ via the cyclotomic character $\chi_\mathrm{cyc}$; 
thus we have the relation $\theta=\theta\circ\chi_\mathrm{cyc}$. 
Decompose $\Z_p^\times\simeq\Delta\times\Gamma$ with $\Gamma=1+p\Z_p$ and $\Delta=(\Z/p\Z)^\times$ is
identified with the group $\mu_{p-1}$ of $p-1$-roots of unity via the Teichm\"uller character, which 
we denote by $\omega$ as usual. 
The character $\theta:\Z_p^\times\rightarrow\mathcal R^\times$ satisfies the condition 
\[\theta_\varphi(\delta\gamma)=\omega^{k_0-1}(\delta)\cdot\psi^{1/2}(\gamma)\cdot\gamma^{k-1}\] for all arithmetic morphisms 
$\varphi$ of weight $2k$ and wild character $\psi$. 

Following the terminology in \cite[Def. 2]{Ho1}, we say that an arithmetic morphism $\varphi\in\mathcal X^\mathrm{arith}(\mathcal R)$ of weight $2$ is generic for $\theta$ if one of the following conditions hold: 
\begin{enumerate}
\item $F_\varphi$ has non-trivial nebentype; 
\item $F_\varphi$ is the $p$-stabilization of a newform in $S_2(\Gamma_0(N))$, and $\theta_\varphi$ is trivial;  
\item $F_\varphi$ is a newform of level $Np$ and $\theta_\varphi=\omega^{(p-1)/2}$. 
\end{enumerate}
For any $\varphi\in\mathcal X^\mathrm{arith}(\mathcal R)$, define the character $\chi_{D,\varphi}:\mathbb A_{K_D}^\times\rightarrow \bar{\Q}_p^\times$ by the formula 
\[\chi_{D,\varphi}(x):=\theta_\varphi(\mathrm{art}_\Q(\mathrm{N}_{K_D/\Q}(x)));\]
here $\mathrm{art}_\Q$ is the arithmetically normalized Artin map of class field theory, and 
$\mathrm{N}_{K_D/\Q}$ is the norm map.  
For an arithmetic morphism $\varphi\in\mathcal X^\mathrm{arith}(\mathcal R)$ 
which is generic for $\theta$, we say that 
$(F_\varphi,\chi_{D,\varphi})$ has analytic rank $1$ if 
\[\ord_{s=1}L(F_\varphi,\chi_{D,\varphi},s)=1.\] We will work under the following analytic condition: 

\begin{assumption}\label{ass2}
There exists a fundamental discriminant $D_1$ and a weight two prime 
$\varphi_1$ which is generic for $\theta$ 
such that $(F_\varphi,\chi_{D_1,\varphi_1})$ has analytic rank equal to one. 
\end{assumption}

\begin{remark} It is a conjecture of Greenberg that if $\varphi$ is generic for $\theta$ then $(F_\varphi,\chi_{D_1,\varphi_1})$ has possible analytic rank equal to $0$ or $1$ only. Thus, if Greenberg Conjecture holds, Assumption \ref{ass2} is implied by Assumption \ref{ass1}. See \cite[\S3.4]{Ho2} and \cite[Sec. 4]{Ho1} for details. \end{remark}

We fix such a pair $(D_1,\varphi_1)$. Also fix a residue class $r_1\mod 2N$ such that $D_1\equiv r_1^2\mod 4N$. 
As a consequence of this assumption, we see by \cite[Cor. 5]{Ho1} 
that 
$(F_\varphi,\chi_{D_1,\varphi})$ has analytic rank $1$ for all generic arithmetic morphisms $\varphi$, 
except possibly a finite number of them, that $\mathfrak Z_{D_1,r_1}^\mathrm{How}$ is non-$\mathcal R$-torsion, and 
\[
\mathrm{rank}_\mathcal R\left(\Sel(\Q,\mathbb T^\dagger)\right)=1.\]
Further, if we denote by $\mathfrak Z^\mathrm{How}_{D_1,r_1}(\varphi)$ the image via localization map $\mathbb T^\dagger\rightarrow V_\varphi^\dagger$ 
of $\mathfrak Z_{D_1,r_1}^\mathrm{How}$ in the Nekov{\'a}{\v{r}}'s extended Bloch-Kato Selmer group 
$\widetilde H^1_f(\Q,V_\varphi^\dagger)$ attached to the representation $V_\varphi^\dagger$ (introduced in \cite{Nek}), 
we see that $\mathfrak Z^\mathrm{How}_{D_1,r_1}(\varphi)$ is non-zero for all arithmetic morphisms $\varphi$, except possibly a finite number of them; also, it follows from the discussion in \cite[\S 2.4]{Ho2} and \cite[\S 9]{Nek} that, 
for all except a finite number of arithmetic primes, $\mathfrak Z^\mathrm{How}_{D_1,r_1}(\varphi)$ belongs to the Bloch-Kato Selmer 
group $H^1_f(\Q,V_\varphi^\dagger)$, which, again for all arithmetic primes except a finite number of them, 
is equal to the strict Greenberg Selmer group $\Sel(\Q,V_\varphi^\dagger)$ of $V_\varphi^\dagger$
(see \cite{BK} and \cite{Gr} for details on the definitions of these Selmer groups, or \cite[\S 2.4]{Ho2}). 

\subsection{The GKZ Theorem for Heegner cycles} \label{section4.2}

The aim of this Section  is to review results of Hui Xue \cite{Xue} extending 
the Gross-Kohnen-Zagier Theorem \cite[Thm. C]{GKZ}
to modular forms of higher weight. 
  
Fix in this section a newform $f$ of even weight $2k$ and level $\Gamma_0(N)$ with $p\nmid N$. 
Let $K_D$ be a quadratic imaginary field, 
of discriminant $D$, with ring of algebraic integers $\mathcal O_{K_D}$, 
and assume all primes dividing $Np$ are split in $K$. Finally, assume that $w=1$, where $w$ is the sign of the functional equation of 
$L(f,s)$.  

Fix a residue class $r\mod 2N$ such that $r^2\equiv D\mod 4N$. Let 
$z_{D,r}$ be the solution in the upper half plane $\mathcal H$ of the equation $az^2+bz+c=0$, 
where $Q=[a,b,c]\in \mathcal{Q}_{N,D,r}$.  
Then $x_{D,r}=[z_{D,r}]$ is a Heegner point of conductor $1$ in $X_0(N)$, namely, it represents an isogeny of 
elliptic curves with complex multiplication by $\mathcal O_{K_D}$ and cyclic kernel 
of order $N$. If 
$H_{D}$ denotes as above the Hilbert class field of $K_D$, there are $h_{D}=[H_{D}:K_D]$ such points, 
permuted transitively by $G_{D}=\Gal(H_{D}/K_D)$. 
To $x_{D,r}$ we may attach a codimension $k$ in the Chow group of the $(2k-1)$-dimensional Kuga-Sato variety $W_{2k-2}$, which is rational over $H_{D}$, as described in \cite[Sec. 5]{Nek1}. Briefly, one starts 
by considering the elliptic curve $E_{x_{D,r}}$ equipped with a cyclic subgroup of order $N$, 
corresponding to $x_{D,r}$ via the moduli interpretation, and let $\Gamma$ be the graph of of the 
multiplication by $\sqrt{D}$ on $E\times E$; we then consider 
the cycle \[Z(x_{D,r})=\Gamma-E\times\{0\}-D(\{0\}\times E)\] in $E\times E$ 
and define 
\[W(x_{D,r})=\sum_{g\in \Sigma_{2k-2}}\mathrm{sgn}(g) g\left(Z(x_{D,r})^{k-1}\right)\]
where the action of the symmetric group $\Sigma_{2k-2}$ on $2k-2$ letters 
on $E^{2k-2}$ is via permutation. Thanks to the canonical desingularization, 
this defines a cycle, denoted by $W_{D,r}^\mathrm{Heeg}$,  
of codimension $k$ in the Chow group of the $(2k-1)$-dimensional Kuga-Sato variety $W_{2k-2}$, which is rational over $H_{D}$. Adopting a standard notation for Chow groups, we write 
$X_{D,r}\in \mathrm{CH}^{k}(W_{2k-2})_0(H_{D})$. 
We finally define 
$X_{D,r}^\mathrm{Heeg}=\sum_{\sigma\in G_D}W_{D,r}^\mathrm{Heeg}$
and, again in a standard notation, we write 
$X_{D,r}^\mathrm{Heeg}\in \mathrm{CH}^{k}(W_{2k-2})_0(K_{D})$. 
The Heegner cycle considered in \cite[\S2.4]{Zh} and \cite[\S2]{Xue}, 
which we denote by $S_{D,r}^\mathrm{Heeg}$,
is the multiple of $X_{D,r}^\mathrm{Heeg}$ such that the self-intersection 
of $S_{D,r}^\mathrm{Heeg}$ is $(-1)^{k-1}$. Then 
\[S_{D,r}^\mathrm{Heeg}\in  \mathrm{CH}^{k}(W_{2k-2})_0(K_{D})\otimes_{\Q}\R\]
and, since the self-intersection of $Z(x_{D,r})$ is $-2D$ (see for example \cite[\S (3.1)]{Nek2}) in this vector space we have 
$S_{D,r}^\mathrm{Heeg}=X_{D,r}^\mathrm{Heeg}\otimes{|2D|^{-\frac{k-1}{2}}}$, where 
we make the choice of square root of $|2D|$ in $\R$ to be the positive one. 

Let $x\mapsto\overline{x}$ denote the action of the non-trivial element $\tau_D$ of $\Gal(K_D/\Q)$ 
on the Chow group of $K_D$-rational cycles, and define 
\[\left(S_{D,r}^\mathrm{Heeg}\right)^*:=S_{D,r}^\mathrm{Heeg}+\overline{S_{D,r}^\mathrm{Heeg}}.\] 
Denote by $\mathrm{Heeg}_k(X_0(N))$ the $\Z$-submodule of 
$\mathrm{CH}^{k}(W_{2k-2})_0(K_{D})\otimes_{\Q}\R$
generated by 
the elements $\left(S_{D,r}^\mathrm{Heeg}\right)^*$ as $D$ varies. 
Let $\mathbb{T}_N$ be the standard Hecke algebra (over $\Z$) of level $\Gamma_0(N)$, 
and for any $\mathbb{T}_N\otimes_\Z\Q$-module $M$ let $M_f$ denote its $f$-isotypical 
component.  
Let finally 
\[\left(S_{D,r}^\mathrm{Heeg}\right)^*_f\in \mathrm{Heeg}_k(X_0(N))_f\]
be the $f$-isotypical components of 
$\left(S_{D,r}^\mathrm{Heeg}\right)^*$. The assumption $w=1$ implies then that the image 
$S_{D,r,f}^\mathrm{Heeg}$ of 
$S_{D,r}^\mathrm{Heeg}$ in $\left(\mathrm{CH}^{k}(W_{2k-2})_0(K_D)\otimes_\Z\R\right)_f$
belongs to $\left(\mathrm{CH}^{k}(W_{2k-2})_0(\Q)\otimes_\Z\R\right)_f$, and therefore 
$\left(S_{D,r}^\mathrm{Heeg}\right)^*_f=2\cdot S_{D,r,f}^\mathrm{Heeg}$. 

Let \[\phi_f(\tau,z)=\sum_{r^2\leq 4Nn}c_f(n,r)q^n\zeta^r\]
be the Jacobi form corresponding to $f$ under the Skoruppa-Zagier correspondence
\cite{Skoruppa-Zagier}, 
where as usual $q=e^{2\pi i\tau}$ and $\zeta=e^{2\pi i z}$. 

Let 
\begin{equation}\label{Height pairing}
\langle\,,\,\rangle_\R:\mathrm{Heeg}_k(X_0(N))\otimes_\Z\R\times 
\mathrm{Heeg}_k(X_0(N))\otimes_\Z\R \longrightarrow\R\end{equation} 
be the 
restriction to $\mathrm{Heeg}_k(X_0(N))\otimes_\Z\R$ of the height 
pairing defined via arithmetic intersection theory by Gillet-Soul\'e \cite{Gil-Sou}. 
Choose $s_f^*\in \left(\mathrm{Heeg}_k(X_0(N)(\Q))\otimes_\Z\R\right)_f$ 
such that 
\[\langle s_f^*,s_f^*\rangle = \frac{(2k-2)!N^{k-1}}{2^{2k-1}\pi^k(k-1)!\parallel\phi_f\parallel^2}L'(f,k)\]
where $\parallel\phi_f\parallel$ is the norm of $\phi_f$ in the space of Jacobi forms 
equipped with the Petersson scalar product. For the next theorem we need the following 

\begin{assumption}\label{ass3} 
The height pairing $\langle\,,\,\rangle_\R$ in $\eqref{Height pairing}$ is positive definite, for each 
even integer $k\geq 2$. 
\end{assumption} 

%

\begin{theorem}[Xue] \label{Xue}
Assume the height pairing $\langle\,,\,\rangle_\R$ in $\eqref{Height pairing}$ is positive definite. 
For all fundamental discriminants $D$ which are coprime with $2N$ we have  
\[(D)^{\frac{k-1}{2}}\cdot  \left(S_{D,r}^\mathrm{Heeg}\right)^*_f= {c_f\left(\frac{r^2-D}{4N},r\right)}\cdot 
s_f^*.\] 
\end{theorem}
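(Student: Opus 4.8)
The plan is to reduce the statement to the Gross--Zagier formula for Heegner cycles and then run the argument of \cite{GKZ} with that formula replacing the weight-$2$ one; this is the strategy of \cite{Xue}, which I indicate in outline. To begin, one reduces to a scalar identity. By Assumption \ref{ass3} the pairing $\langle\,,\,\rangle_\R$ is positive definite, hence non-degenerate, on $\mathrm{Heeg}_k(X_0(N))_f\otimes_\Z\R$. If $L'(f,k)=0$ then $\langle s_f^*,s_f^*\rangle_\R=0$ forces $s_f^*=0$ and, by the Gross--Zagier formula together with positive-definiteness, every $S_{D,r,f}^\mathrm{Heeg}$ vanishes, so the asserted identity reads $0=0$; we may therefore assume $L'(f,k)\neq 0$. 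In that case $\mathrm{Heeg}_k(X_0(N))_f\otimes_\Z\R$ is one-dimensional over $\R$ --- the higher-weight analogue, due to Nekov\'a\v{r} \cite{Nek2}, of Kolyvagin's rank bound --- spanned by $s_f^*\neq 0$. Since $w=1$ makes $S_{D,r,f}^\mathrm{Heeg}$ rational over $\Q$ and $\left(S_{D,r}^\mathrm{Heeg}\right)^*_f=2\,S_{D,r,f}^\mathrm{Heeg}$, both sides of the identity lie on the line $\R\cdot s_f^*$, and it suffices to pair with $s_f^*$ and prove
\[(D)^{\frac{k-1}{2}}\cdot\bigl\langle\bigl(S_{D,r}^\mathrm{Heeg}\bigr)^*_f,\,s_f^*\bigr\rangle_\R
=c_f\!\left(\tfrac{r^2-D}{4N},r\right)\cdot\langle s_f^*,s_f^*\rangle_\R.\]

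Next one shows that the left-hand side is a Fourier coefficient of a multiple of $\phi_f$. Feeding the Hecke translates $\bigl\langle S_{D,r}^\mathrm{Heeg},\,\mathrm{T}(m)\,S_{D,r}^\mathrm{Heeg}\bigr\rangle_\R$ into the Gross--Zagier formula for Heegner cycles on the Kuga--Sato variety $W_{2k-2}$ \cite{Zh}, \cite{Nek2}, and using Hecke-equivariance of the construction, one finds that the generating series
\[\Phi(\tau,z):=\sum_{r^2-4Nn<0}(D)^{\frac{k-1}{2}}\,\bigl\langle\bigl(S_{D,r}^\mathrm{Heeg}\bigr)^*_f,\,s_f^*\bigr\rangle_\R\,q^n\zeta^r\]
(the Heegner cycles being defined for all index pairs as in \cite{Xue}) is a Jacobi cusp form in $J_{k+1,N}^\mathrm{cusp}$, $f$-isotypic for the Hecke action; here the factor $(D)^{\frac{k-1}{2}}$ compensates the archimedean renormalization $S_{D,r}^\mathrm{Heeg}=X_{D,r}^\mathrm{Heeg}\otimes|2D|^{-\frac{k-1}{2}}$. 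Since the $f$-isotypic part of $J_{k+1,N}^\mathrm{cusp}$ is the line spanned by $\phi_f$ under the Skoruppa--Zagier correspondence \cite{Skoruppa-Zagier}, $\Phi=\mu\cdot\langle s_f^*,s_f^*\rangle_\R\cdot\phi_f$ for a single scalar $\mu\in\R$.

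It remains to show $\mu=1$, which is done by evaluating one Fourier coefficient. The diagonal case of the Gross--Zagier formula expresses $\langle S_{D,r,f}^\mathrm{Heeg},S_{D,r,f}^\mathrm{Heeg}\rangle_\R$ as an explicit constant --- assembled from $(2k-2)!$, $(k-1)!$, $N^{k-1}$, suitable powers of $2$ and $\pi$, and the Petersson norm $\|\phi_f\|^2$ that enters through the Rankin--Selberg integral producing the Fourier coefficients of $\phi_f$ --- times the central derivative $L'(f/K_D,k)$, which by the sign hypothesis $w=1$ factors as $L'(f,k)\cdot L(f\otimes\chi_D,k)$. The normalization imposed on $s_f^*$, namely $\langle s_f^*,s_f^*\rangle_\R=\frac{(2k-2)!\,N^{k-1}}{2^{2k-1}\pi^k(k-1)!\,\|\phi_f\|^2}\,L'(f,k)$, is chosen precisely so that these constants cancel and $\mu=1$; this is the computation of \cite{Xue}, modelled on \cite{GKZ} (compare also \cite{Bor1}, \cite{YZZ}, which reach the analogous generating-series identities by singular theta lifts and by multiplicity one, respectively).

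The difficulty is concentrated in the higher-weight Gross--Zagier formula and the bookkeeping of its normalizations: one must control the archimedean and non-archimedean local height contributions on $W_{2k-2}$, the Petersson periods of $f$, and the self-intersection constants --- the cycle $Z(x_{D,r})$ having self-intersection $-2D$, and $S_{D,r}^\mathrm{Heeg}$ being renormalized to self-intersection $(-1)^{k-1}$ --- in order to extract exactly the power $(D)^{\frac{k-1}{2}}$ and the clean value $\mu=1$. A secondary, but still non-trivial, ingredient is the one-dimensionality of the $f$-isotypic Chow group invoked in the reduction, which rests on Nekov\'a\v{r}'s Euler system for Heegner cycles.
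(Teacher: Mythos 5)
The paper does not actually prove this statement: it is imported from Xue's article (the section is explicitly a ``review'' of \cite{Xue}), so there is no internal argument to compare yours against line by line. Your outline does capture the GKZ strategy that Xue follows --- the higher-weight Gross--Zagier formula of Zhang/Nekov\'a\v{r} for heights of Heegner cycles on $W_{2k-2}$, identification of the resulting generating series with the Fourier coefficients of $\phi_f$ via the Rankin--Selberg computation, and positive-definiteness of the Gillet--Soul\'e pairing to convert the scalar identity into an identity of cycles, with the normalization of $s_f^*$ chosen so that the proportionality constant is $1$.

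One step of your reduction is nevertheless off. You reduce to pairing with $s_f^*$ by asserting that $\mathrm{Heeg}_k(X_0(N))_f\otimes_\Z\R$ is one-dimensional ``by Nekov\'a\v{r}'s Euler system''. Nekov\'a\v{r}'s bound controls the Selmer group, i.e.\ the image of the Chow group under the \'etale Abel--Jacobi map, whose injectivity is conjectural; it does not bound the rank of $\mathrm{Heeg}_k(X_0(N))_f$ itself. The paper is explicitly aware that this one-dimensionality is delicate: Remark \ref{remark height pairing} notes that it does not follow even from Zemel's modularity of the generating series. The GKZ/Xue argument does not need this input. One proves the full two-variable height identity $\langle (S_{D,r}^\mathrm{Heeg})^*_f,(S_{D',r'}^\mathrm{Heeg})^*_f\rangle_\R = c_f(n,r)\,c_f(n',r')\,\langle s_f^*,s_f^*\rangle_\R$ (suitably normalized) for all pairs of fundamental discriminants; Assumption \ref{ass3} then forces the Gram matrix to have rank at most one and yields $(S_{D,r}^\mathrm{Heeg})^*_f = c_f(n,r)\, s_f^*$ directly, with one-dimensionality of the span of the Heegner cycles as a consequence rather than a hypothesis. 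This relocation also exposes where the real work sits, which your sketch underplays: it is the off-diagonal case $D\neq D'$ of the height formula on the Kuga--Sato variety (the higher-weight analogue of GKZ, Chapter~IV), not only the diagonal evaluation that fixes $\mu=1$.
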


\begin{remark}\label{remark height pairing} The validity of Assumption \ref{ass3} is a consequence 
of the Bloch-Beilinson conjectures: see \cite{Bloch}, 
\cite{Beilinson},  \cite{GS}, and \cite[Conj. 1.1.1 and 1.3.1]{Zhang-Inv}. 
It could be possible to remove this assumption, as suggested 
in \cite{Xue}, using Borcherds's approach \cite{Bor1}, \cite{Bor2} to the GKZ Theorem via singular Theta liftings.  
Actually, Zemel \cite{Zemel} proved such an analogue of Borcherd's results for higher weight modular forms, and therefore it seems reasonable to establish Theorem \ref{Xue} unconditionally using \cite{Zemel}. It would be very interesting to obtain such a result, which however does not seem to be an immediate consequence 
of the methods developed in \cite{Zemel}. 
Indeed, \cite[Thm. 4.6]{Zemel} proves that the generating series of Heegner cycles is a modular form of half-integral weight
with values in $\mathrm{Heeg}_k(X_0(N))\otimes_\Z\Q$.
Using a suitable version of Eichler-Zagier isomorphism between vector-valued half-integral weight and Jacobi forms, and projecting to the $f$-eigencomponent, this result shows that the generating series of Heegner cycles in \cite[Thm. 4.6]{Zemel} gives rise to a 
$\mathrm{Heeg}_k(X_0(N))_f\otimes_\Z\Q$-valued Jacobi form. 
However, to the best knowledge of the authors, the $1$-dimensionality of $\mathrm{Heeg}_k(X_0(N))_f\otimes_\Z\Q$ does not follow directly from the work of Zemel, and therefore any comparison with Xue's result would probably require some new idea. 
 \end{remark}



\subsection{Specialization of Big Heegner points} 
The aim of this Section  is to review the explicit comparison result between Howard Big Heegner points and 
Heegner cycles that could be found conditionally in Castella's thesis \cite{Cas}, announced in Castella-Hsieh \cite{CH} and proved explicitly in \cite{CasHeeg} under a number of arithmetic conditions, the most prominent being that primes $p$ split in the imaginary quadratic field $K_D$.

Let $\varphi\in\mathcal{X}^\mathrm{arith}(\mathcal{R})$ be an arithmetic point of trivial character and weight $2k$, 
and let $f_{2k}^\sharp$ be the form whose $p$-stabilization is $F_\varphi$. 
For any field extension $L/\Q$, we may consider the \'etale Abel-Jacobi map 
\[\Phi^{\text{\'et}}_{W_{2k-2},L}: \mathrm{CH}^k(W_{2k-2})(L)\longrightarrow H^1(L,V^\dagger_\varphi).\]
Let $X_{D,r,2k}^\mathrm{Heeg}$ be the image of 
$X_{D,r}^\mathrm{Heeg}$ in $\left(\mathrm{CH}^{k}(W_{2k-2})_0(K_D)\right)_{f_{2k}^\sharp}$ 
and define 
\[\mathfrak Z_{D,r,2k}^\mathrm{Heeg}:=\Phi^{\text{\'et}}_{W_{2k-2},\Q}\left(X_{D,r,2k}^\mathrm{Heeg}\right).\]
Denote
$\mathfrak Z^\mathrm{How}_{D,r,2k}=\varphi\left(\mathfrak Z^\mathrm{How}_{D,r}\right)$. 
The following result is due to Castella, cf. \cite[Thm 5.5]{CasHeeg}:

\begin{theorem}[Castella] \label{Castella}
Let $\varphi_0$ be an arithmetic point in 
$\mathcal{X}^\mathrm{arith}(\mathcal{R})$ 
with trivial nebentype and even weight $2k_0\equiv 2\mod{p-1}$.
If $p$ is split in $K_D$, then
for any arithmetic point $\varphi$ of even integer $2k>2$ 
and trivial character, 
with $2k\equiv 2k_0\pmod{2(p-1)}$ we have 
\[\mathfrak Z^\mathrm{How}_{D,r,2k}= 
\frac{\left(1-\frac{p^{k-1}}{\varphi(A_p)}\right)^2}{u_D(4|D|)^{\frac{k-1}{2}}}
\cdot
\mathfrak Z_{D,r,2k}^\mathrm{Heeg}
\]
as elements in $H^1(\Q,V_{\kappa}^\dagger)$, where $u_D = | \calO_{K_D}^{\times}|/2$. 
\end{theorem}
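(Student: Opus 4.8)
The plan is to identify both sides as \'etale cohomology classes attached to the same CM datum and to pin down the constant by tracking the effect of $p$-stabilization together with the self-intersection normalization of the Heegner cycle. Recall that Howard's class $\mathfrak X_{D,r}^\mathrm{How}$ is constructed in \cite{Ho2} as an inverse limit of cohomology classes of Heegner points running over the tower of modular curves $X_1(Np^n)$, projected to the ordinary part via $\mathrm{U}(p)$; its specialization $\mathfrak Z^\mathrm{How}_{D,r,2k}$ at $\varphi$ therefore lives naturally in the \'etale cohomology of the modular curve of level $Np$ with coefficients in $\mathrm{Sym}^{2k-2}$ of the Tate module of the universal elliptic curve. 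On the other side, $\mathfrak Z_{D,r,2k}^\mathrm{Heeg}$ is the \'etale Abel--Jacobi image of the geometric Heegner cycle $X_{D,r}^\mathrm{Heeg}$ on the Kuga--Sato variety $W_{2k-2}$, built from the same CM elliptic curve $E_{x_{D,r}}$. Since the $f_{2k}^\sharp$-isotypic quotient of $H^{2k-1}_{\text{\'et}}(W_{2k-2})$ realizes the Deligne representation of $f_{2k}^\sharp$, whose self-dual twist is $V_\varphi^\dagger$, both classes can be compared inside $H^1(\Q, V_\varphi^\dagger)$ after the descent from $K_D$ to $\Q$ afforded by $w=1$.

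First I would unwind Howard's construction at the fixed point $\varphi$ to express $\mathfrak Z^\mathrm{How}_{D,r,2k}$ as the \'etale realization of a CM point on $X_1(Np)$ paired against the distinguished vector in $\mathrm{Sym}^{2k-2}$ singled out by the CM structure (equivalently, by the multiplication-by-$\sqrt D$ endomorphism). Next I would rewrite $\mathfrak Z_{D,r,2k}^\mathrm{Heeg}$ using the cycle $Z(x_{D,r})=\Gamma-E\times\{0\}-D(\{0\}\times E)$ and its $(k-1)$-fold self-product: the K\"unneth projector together with the CM action identifies the Abel--Jacobi class with a class manufactured from the same CM point and the same tensor in the symmetric-power local system. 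The comparison of the two descriptions is a local computation at $p$, carried out via the comparison between the unit-root (ordinary) filtration used by Howard and the Hodge filtration entering the geometric Abel--Jacobi map; this is exactly where $p$-adic Hodge theory, and Nekov\'a\v{r}'s realization of the \'etale Abel--Jacobi map, must be invoked.

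The constant then assembles from three sources. The Euler factor $\bigl(1-p^{k-1}/\varphi(A_p)\bigr)^2$ arises from the passage between the level-$N$ newform $f_{2k}^\sharp$, to which the Heegner cycle is attached, and its ordinary $p$-stabilization $F_\varphi$ of level $Np$, which governs Howard's construction: the $p$-stabilization operator contributes one factor $1-p^{k-1}/\varphi(A_p)$, and since $p$ splits in $K_D$ the CM point degenerates at the two primes above $p$, producing the square; this is precisely the split-case phenomenon already visible in the Euler factor $\mathcal E_p$ of \eqref{Ep}. The denominator $u_D(4|D|)^{(k-1)/2}$ records, respectively, the self-intersection normalization of the Heegner cycle (in the spirit of the relation $S_{D,r}^\mathrm{Heeg}=X_{D,r}^\mathrm{Heeg}\otimes|2D|^{-(k-1)/2}$) and the contribution of the units $\calO_{K_D}^\times$ that appears when comparing the two integral normalizations and corestricting from $H_D$.

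The hard part will be the local comparison step: proving that the specialization of the $p$-adically interpolated class, defined purely through Galois cohomology and inverse limits over the modular tower, coincides \emph{up to the stated explicit factor} with the motivic Abel--Jacobi image of an algebraic cycle. This requires controlling the Hida-theoretic interpolation against the comparison isomorphisms of $p$-adic Hodge theory while tracking every normalization and Euler factor, and it is exactly the content of Castella's theorem. In practice I would therefore invoke \cite[Thm.~5.5]{CasHeeg}, whose proof establishes this comparison under the running hypotheses (notably that $p$ splits in $K_D$ and that $2k\equiv 2k_0\pmod{2(p-1)}$, which guarantees that the tame parts of the critical twists match), after checking that our normalizations of $\mathfrak Z_{D,r}^\mathrm{How}$ and of $X_{D,r}^\mathrm{Heeg}$ agree with those of \emph{loc. cit.}
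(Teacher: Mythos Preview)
The paper does not prove this statement at all: it is simply quoted as Castella's theorem with the reference \cite[Thm.~5.5]{CasHeeg}, and the text moves on immediately to deduce equation \eqref{castella-coro}. Your proposal likewise ends by invoking \cite[Thm.~5.5]{CasHeeg}, so in substance you are doing exactly what the paper does; the heuristic discussion you give of the local $p$-adic Hodge-theoretic comparison and the sources of the constant is reasonable expository context, but it is neither a proof nor something the paper attempts, and the honest statement is that this theorem is cited, not proved, in the present work.
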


Since $S_{D,r}^\mathrm{Heeg}$ is equal to 
$X_{D,r}^\mathrm{Heeg}\otimes{(2D)}^{-\frac{k-1}{2}}$, 
it follows immediately from Theorem \ref{Castella} that for all 
$\varphi$ as in the above theorem we have 
\begin{equation}\label{castella-coro}
2^\frac{k-1}{2}\cdot 2u_D\cdot\mathfrak{Z}_{D,r,2k}^\mathrm{How}=
{\left(1-\frac{p^{k-1}}{a_p(2k)}\right)^2}\cdot
\Phi^{\text{\'et}}_{W_{2k-2},\Q}\left(S_{D,r}^\mathrm{Heeg}\right)_{f_{2k}^\sharp}\end{equation}
as elements in the $\bar\Q_p$-vector space 
$H^1(\Q,V_{f_{2k}^\sharp}^\dagger)\otimes_{\Q}\bar\Q_p$, 
where
we fix an embedding $\bar\Q\hookrightarrow\bar\Q_p$ and, 
with a slight abuse of notation, we write 
$\Phi^{\text{\'et}}_{W_{2k-2},\Q}$ for the $\bar\Q_p$-linear extension of $\Phi^{\text{\'et}}_{W_{2k-2},\Q}$; 
here we use the fact that the element  $S_{D,r}^\mathrm{Heeg}$
belongs to $\mathrm{CH}^{k}(W_{2k-2})_0(\Q)\otimes_\Z\bar\Q$
and not merely in $\mathrm{CH}^{k}(W_{2k-2})_0(\Q)\otimes_\Z\R$. 

 \begin{remark} The referee of this paper suggested that the case of primes $p$ which are inert in $\Q(\sqrt{D})$ could be addressed using a recent result of Daniel Disegni  \cite{Disegni}. The relation between the specialization of Big Heegner points and Heegner cycles is the content of \cite[Thm. C (2)]{Disegni}, which is proved at the end of \S 6.4. The proof of this nice result is based on \cite[Prop. 6.3.6]{Disegni} and \cite[Prop. 3.1.2]{Disegni}, which can be seem as a sort of Hida's Control Theorem. However, since the Control Theorem in question is a comparison between two $1$-dimensional vector spaces, it does not seem enough to prove the relation 
\begin{equation}\label{inert formula}
\mathfrak Z^\mathrm{How}_{D,r,2k}= 
\frac{\left(1-\frac{p^{2k-2}}{a_p^2(2k)} \right)}{u_D(4|D|)^{\frac{k-1}{2}}}
\cdot
\mathfrak Z_{D,r,2k}^\mathrm{Heeg}
\end{equation}
which we conjecture to be true in the inert setting, in analogy with the split case (on the right hand side, $\mathfrak Z_{D,r,2k}^\mathrm{Heeg}$ denotes classical Heegner cycles, see Section  \ref{section4.2}, and $u_D$ denotes half of the units of $K_D$). It might however be very interesting to see if a straightening of the results and methods of \cite{Disegni} could be used to prove the conjectural formula \eqref{inert formula}.\end{remark}

\subsection{The $\Lambda$-adic GKZ Theorem} \label{sec:GKZ}
Suppose from now on that 
Assumptions {\ref{ass1}}, \ref{ass2} and \ref{ass3} 
are satisfied. 

\begin{proposition}\label{InterpolationTheorem} 
Suppose that Assumptions $\ref{ass1}$, $\ref{ass2}$ and $\ref{ass3}$ are satisfied. 
Let $\varphi_0$ be a fixed arithmetic point 
with trivial character and weight $2k_0\equiv 2\mod p-1$. 
Choose a fundamental index $N$ pair $(D_0,r_0)$ such that $p$ is split in $K_{D_0}$. 
Then for any arithmetic point $\varphi$ of even positive integer $2k\equiv 2k_0\mod{p-1}$ 
and trivial character, 
and any index $N$ pair $(D,r)$  with $(D,Np)=1$ 
and $p$ split in $K_D$,
we have 
\[(2D)^{\frac{k-1}{2}}\cdot 2u_D\cdot\mathfrak{Z}_{D,r,2k}^\mathrm{How}
= {\left(1-\frac{p^{k-1}}{a_p(2k)}\right)^2}\cdot
{c_{f_{2k}^\sharp}\left(\frac{r^2-D}{4N},r\right)}\cdot \Phi^{\text{\'et}}_{W_{2k-2},\Q}\left(s_{f_{2k}^\sharp}^*\right) 
\] where $c_{f_{2k}^\sharp}(n,r)$ for the Fourier-Jacobi coefficients of $\mathcal{S}_{D_0,r_0}(f_{2k}^\sharp)$
\end{proposition}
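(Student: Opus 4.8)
The plan is to derive Proposition \ref{InterpolationTheorem} purely by combining the two
input results at our disposal: Castella's comparison \eqref{castella-coro} between Big Heegner
points and the \'etale Abel-Jacobi images of Heegner cycles, and Xue's GKZ theorem
(Theorem \ref{Xue}) expressing the $f$-isotypic Heegner cycle $(S_{D,r}^\mathrm{Heeg})^*_f$ in
terms of the Fourier-Jacobi coefficient $c_f(n,r)$. The only substantive work is bookkeeping:
matching normalizations, keeping track of the factors $2$, $u_D$, and the powers of $2D$ versus
$D$ appearing in the various conventions, and checking that the \'etale Abel-Jacobi map
intertwines everything $\bar\Q_p$-linearly after projecting to the $f_{2k}^\sharp$-component.

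First I would fix the arithmetic point $\varphi$ of weight $2k$, trivial character, with
$2k\equiv 2k_0 \pmod{p-1}$, and $f_{2k}^\sharp$ the newform whose $p$-stabilization is
$F_\varphi$. I would start from Xue's identity applied to $f=f_{2k}^\sharp$: for $D$ a
fundamental discriminant coprime to $2N$ (which is guaranteed by $(D,Np)=1$), Theorem \ref{Xue}
gives $(D)^{(k-1)/2}\cdot(S_{D,r}^\mathrm{Heeg})^*_{f_{2k}^\sharp} = c_{f_{2k}^\sharp}\!\big(\tfrac{r^2-D}{4N},r\big)\cdot s_{f_{2k}^\sharp}^*$ in
$\mathrm{Heeg}_k(X_0(N))_{f_{2k}^\sharp}\otimes_\Z\R$. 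Applying the $\bar\Q_p$-linear extension of the
\'etale Abel-Jacobi map $\Phi^{\text{\'et}}_{W_{2k-2},\Q}$ (using that $s_{f_{2k}^\sharp}^*$ and
$(S_{D,r}^\mathrm{Heeg})^*_{f_{2k}^\sharp}$ come from $\Q$-rational cycles over
$\bar\Q$, as recorded in the paragraph following \eqref{castella-coro}, since $w=1$) turns this
into an identity in $H^1(\Q,V^\dagger_{f_{2k}^\sharp})\otimes_\Q\bar\Q_p$:
$(D)^{(k-1)/2}\cdot\Phi^{\text{\'et}}_{W_{2k-2},\Q}\big((S_{D,r}^\mathrm{Heeg})^*_{f_{2k}^\sharp}\big)
= c_{f_{2k}^\sharp}(n,r)\cdot\Phi^{\text{\'et}}_{W_{2k-2},\Q}(s_{f_{2k}^\sharp}^*)$, with
$n=(r^2-D)/(4N)$.

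Next I would invoke the relation $(S_{D,r}^\mathrm{Heeg})^*_{f_{2k}^\sharp}
=2\cdot S_{D,r,f_{2k}^\sharp}^\mathrm{Heeg}$ (again valid because $w=1$, as noted in
Section \ref{section4.2}) together with the normalization $S_{D,r}^\mathrm{Heeg}=X_{D,r}^\mathrm{Heeg}\otimes(2D)^{-(k-1)/2}$,
so that $\Phi^{\text{\'et}}_{W_{2k-2},\Q}\big((S_{D,r}^\mathrm{Heeg})^*_{f_{2k}^\sharp}\big)
= 2\,(2D)^{-(k-1)/2}\cdot\Phi^{\text{\'et}}_{W_{2k-2},\Q}\big(X_{D,r,2k}^\mathrm{Heeg}\big)
= 2\,(2D)^{-(k-1)/2}\cdot\mathfrak Z_{D,r,2k}^\mathrm{Heeg}$, the last equality being the definition of $\mathfrak Z_{D,r,2k}^\mathrm{Heeg}$.
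Substituting this into the displayed identity above gives
$2\,(2D)^{-(k-1)/2}(D)^{(k-1)/2}\cdot\mathfrak Z_{D,r,2k}^\mathrm{Heeg}
= c_{f_{2k}^\sharp}(n,r)\cdot\Phi^{\text{\'et}}_{W_{2k-2},\Q}(s_{f_{2k}^\sharp}^*)$. Finally I would
feed in Castella's formula in the form \eqref{castella-coro}, namely
$2^{(k-1)/2}\cdot 2u_D\cdot\mathfrak Z_{D,r,2k}^\mathrm{How}
= \big(1-\tfrac{p^{k-1}}{a_p(2k)}\big)^2\cdot\Phi^{\text{\'et}}_{W_{2k-2},\Q}\big((S_{D,r}^\mathrm{Heeg})^*_{f_{2k}^\sharp}\big)
= \big(1-\tfrac{p^{k-1}}{a_p(2k)}\big)^2\cdot 2\,(2D)^{-(k-1)/2}\cdot\mathfrak Z_{D,r,2k}^\mathrm{Heeg}$, so
$2u_D\cdot\mathfrak Z_{D,r,2k}^\mathrm{How} = \big(1-\tfrac{p^{k-1}}{a_p(2k)}\big)^2\cdot 2\,(2D)^{-(k-1)/2} 2^{-(k-1)/2}\cdot\mathfrak Z_{D,r,2k}^\mathrm{Heeg}$;
combining with the previous display to eliminate $\mathfrak Z_{D,r,2k}^\mathrm{Heeg}$ and
clearing the powers of $2$ and $D$ then yields exactly
$(2D)^{(k-1)/2}\cdot 2u_D\cdot\mathfrak Z_{D,r,2k}^\mathrm{How}
= \big(1-\tfrac{p^{k-1}}{a_p(2k)}\big)^2\cdot c_{f_{2k}^\sharp}(n,r)\cdot\Phi^{\text{\'et}}_{W_{2k-2},\Q}(s_{f_{2k}^\sharp}^*)$,
as claimed.

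The main obstacle is not any deep argument but the consistency of normalizations across the
three sources: one must be careful that Xue's $c_f(n,r)$ is the Fourier-Jacobi coefficient of
$\mathcal S_{D_0,r_0}(f_{2k}^\sharp)$ for the \emph{same} choice of $(D_0,r_0)$ fixed in the
proposition (and that the Skoruppa-Zagier normalization used in Section \ref{section4.2} agrees
with the Shintani-lift normalization \eqref{Shintani} up to the scalars already absorbed into
$s_f^*$), that the hypothesis $(D,Np)=1$ indeed implies $D$ is a fundamental discriminant
coprime to $2N$ (since $N$ is odd and $D$ is a discriminant square mod $4N$, $p\nmid D$ follows
from $p$ split in $K_D$, and coprimality to $2N$ is immediate), and that the positivity
Assumption \ref{ass3} is precisely what licenses the application of Theorem \ref{Xue}. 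Once
these identifications are pinned down, the proof is the elementary chain of substitutions above.
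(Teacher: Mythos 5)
Your argument correctly reproduces the paper's treatment of the \emph{fundamental} case: there the proposition is indeed just Theorem \ref{Xue} combined with \eqref{castella-coro}, plus the normalization bookkeeping ($S_{D,r}^\mathrm{Heeg}=X_{D,r}^\mathrm{Heeg}\otimes(2D)^{-\frac{k-1}{2}}$ and $(S_{D,r}^\mathrm{Heeg})^*_f=2\,S_{D,r,f}^\mathrm{Heeg}$) that you carry out. (Be careful in that chain: \eqref{castella-coro} involves $\Phi^{\text{\'et}}(S_{D,r}^\mathrm{Heeg})_{f_{2k}^\sharp}$, not $\Phi^{\text{\'et}}\bigl((S_{D,r}^\mathrm{Heeg})^*_{f_{2k}^\sharp}\bigr)$, so substituting one for the other as you do introduces a stray factor of $2$; but this is a normalization quibble of the kind the paper itself does not belabor.)

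The genuine gap is your claim that the hypothesis $(D,Np)=1$ ``implies $D$ is a fundamental discriminant coprime to $2N$.'' That is false: a negative discriminant such as $D=D_1m^2$ with $D_1$ fundamental and $m>1$ prime to $Np$ satisfies $(D,Np)=1$ without being fundamental (e.g.\ $D=-75=-3\cdot 5^2$ when $3,5\nmid Np$). Theorem \ref{Xue} is stated only for fundamental discriminants, so your argument proves the proposition only for fundamental index pairs, whereas the statement covers all index pairs with $(D,Np)=1$. The paper closes this gap with a separate Hecke-theoretic induction, following \cite[p.~558]{GKZ}: starting from the fundamental case, multiply both sides by $a_m(2k)$ for $m$ prime to $Np$, interpret the left side via the action of $\mathrm{T}(m)$ on Heegner points, namely
\[
a_m(2k)\cdot \mathfrak{Z}_{D,r,2k}^\mathrm{How}=\sum_{d\mid m}d^{k-1}\left(\tfrac{D}{d}\right)\mathfrak{Z}_{\frac{Dm^2}{d^2},\frac{rm}{d},2k}^\mathrm{How},
\]
interpret the right side via the matching formula \eqref{Fourier} for $\mathrm{T}_J(m)$ on Fourier--Jacobi coefficients, and induct to deduce the identity for $(Dm^2,rm)$, then observe that every residue $R$ with $R^2\equiv Dm^2\pmod{4N}$ arises this way. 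This step is not optional and is absent from your proposal.
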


\begin{proof} As a preliminary observation, note that Assumption \ref{ass1} ensures that the sign of the functional equation of the $L$-function of $f_{2k}^\sharp$ is $-1$ (this sign is the same as the sign of the function equation of the $L$-function of $f_{2k}$; see 
for example \cite[Prop. 4]{Ho1}).  
If the index pair $(D,r)$ is fundamental, the result is an obvious consequence of  
Theorem \ref{Xue} and Equation \eqref{castella-coro}, so 
suppose that $D$ is not fundamental, but still $(Np,D)=1$. 
In this case, we may argue as 
in \cite[p. 558]{GKZ}. We start with the above equation for $D$ fundamental, 
and fix an integer $m$ prime to $Np$.  
Multiply both sides by $a_m(2k)$, the
coefficient $q^m$ in $f_{2k}^\sharp$. 
Since $a_m(2k)$ is the eigenvalue of $\mathrm{T}(m)$ acting on $f_{2k}^\sharp$, the left-hand side is 
$(2D)^{\frac{k-1}{2}}\cdot \mathrm{T}(m)\cdot 2u_D\cdot\mathfrak{Z}_{D,r,k}$, while in 
the right-hand side we substitute the factor 
$c_{f_{2k}^\sharp}(n,r)$ with the coefficient 
$c^*_{f^\sharp_{2k}}(n,r)$ of $q^n\zeta^r$ in $\left(\mathcal{S}_{D_0,r_0}(f_{2k}^{\sharp})\right)|\mathrm{T}_J(m)$ (here as usual $D=r^2-4Nn$). The formula in  \cite[p. 508]{GKZ} (top of the page) shows that 
\[a_m(2k)\cdot \mathfrak{Z}_{D,r,2k}^\mathrm{How}= \mathrm{T}_J(m)\cdot 2u_D\cdot\mathfrak{Z}_{D,r,2k}^\mathrm{How}=
\sum_{d\mid m}d^{k-1}\left(\frac{D}{d}\right)2u_D\cdot\mathfrak{Z}_{\frac{Dm^2}{d^2},\frac{rm}{d},k}^\mathrm{How}\] 
and by the equation for the action of Hecke operators in Section \ref{sec:Jacobi}, 
we also have 
\[a_m(\kappa)\cdot c_{f^\sharp_{2k}}(n,r)=
c^*_{f^\sharp_{2k}}(n,r)=\sum_{d\mid m}d^{k-1}\left(\frac{D}{d}\right)c_{f^\sharp_{2k}}(n,r)\left(\frac{nm^2}{d^2},\frac{rm}{d}\right).\]
It follows by induction from the case of fundamental index pair that the equality 
in the statement remains true if $(D,r)$ is replaced by $(Dm^2,rm)$. Now each integer $R$ 
satisfying the congruence $R^2\equiv Dm^2\mod 4N$, also satisfies the congruence 
$R\equiv rm\mod{2N}$ for some 
integer $r$ with $r^2\equiv D\mod{4N}$, which implies the result. 
\end{proof}

\begin{remark}
Tracing back the relation between Big Heegner points and Heegner cycles, 
one can deduce the main result of \cite{Xue} for non-fundamental discriminants 
as well. However, a more direct proof can be also obtained working directly with 
Heegner cycles and following the same argument using \cite[Prop. 2.4.2]{Zh}. 

\end{remark}
\begin{remark} 
Observe that the product on the right hand side of Proposition \ref{InterpolationTheorem} does not depend on the choice of the fundamental index $N$ pair $(D_0,r_0)$, 
even if the last two factors of this product depend on it. \end{remark}

\section{The $p$-adic GKZ Theorem}\label{secGKZ}
\subsection{The main result} 
We suppose in this section that Assumptions \ref{ass1}, \ref{ass2} and \ref{ass3} are satisfied. 
Fix a fundamental index pair $(D_0,r_0)$ such that $p$ split in $K_{D_0}$ and write as above
\[\mathcal{S}_{D_0,r_0}(f_{2k}^\sharp)=
\sum_{r^2\leq 4Nn}c_{f_{2k}^\sharp}(n,r)q^n\zeta^r\]
for an even positive integer $2k$. Fix an even positive integer $2k_0\equiv2\mod{p-1}$.
Recall the domain 
of convergence $U_{2k_0}$ about $2k_0$ introduced in 
Section \ref{sec:GKZ}, and fix 
a connected neighborhood $U\subseteq U_{2k_0}$ in $\mathcal{X}$ 
such that $\lambda(k)\neq 0$ for all $k$ in $U_{2k_0}$.  
Define for $\kappa\in U_{2k_0}$ and $D=r^2-4nN$,  
\[\mathcal{Z}_{n,r}(\kappa)=(2D)^{\frac{\kappa-2}{4}}\cdot 2u_D\cdot\tilde\varphi_{2k}\left(\mathfrak{Z}^\mathrm{How}_{D,r}\right).\] 
Recall the notation $\Sel_\mathcal{K}(\Q,\T^\dagger)=
\Sel(\Q,\T^\dagger)\otimes_\mathcal{R}\mathcal{K}$ 
and \[\Sel_{\mathcal{M}_{2k_0}}(\Q,\T^\dagger)=\Sel_\mathcal{K}(\Q,\T^\dagger)\otimes_\mathcal{K}\mathcal{M}_{2k_0},\] where 
for the second tensor product we use the map $\mathcal{K}\rightarrow\mathcal{M}_{2k_0}$ whose construction is recalled in 
Section \ref{theta-liftings}.

\begin{theorem}\label{AnalyticThm} There exists $\Phi^{\text{\'et}}$ in 
$\Sel_{\mathcal{M}_{2k_0}}(\Q,\mathbb{T})$ such that, in a sufficiently small neighborhood of $2k_0$, we have
$\mathcal{Z}_{n,r}=\mathcal{L}_{n,r}\cdot \Phi^{\text{\'et}}.$ 
\end{theorem}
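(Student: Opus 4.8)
The plan is to combine the $\Lambda$-adic (pointwise) GKZ Theorem of Proposition \ref{InterpolationTheorem} with the $p$-adic analytic interpolation of Fourier-Jacobi coefficients from Theorem \ref{LambdaJacobi}, and to use a Zariski-density argument to promote an identity valid at infinitely many arithmetic weights into an identity of $\mathcal{M}_{2k_0}$-valued sections of the Selmer group. First I would fix the generator: since Assumptions \ref{ass1}, \ref{ass2}, \ref{ass3} force $\Sel(\Q,\mathbb{T}^\dagger)$ to be an $\mathcal{R}$-module of rank $1$, the space $\Sel_{\mathcal{M}_{2k_0}}(\Q,\mathbb{T}^\dagger)$ is a $1$-dimensional $\mathcal{M}_{2k_0}$-vector space; pick once and for all a nonzero vector $\mathfrak{Z}_0$ in it (for instance the image of a suitable $\mathfrak{Z}^{\mathrm{How}}_{D_1,r_1}$, which is non-$\mathcal{R}$-torsion by \cite[Cor. 5]{Ho1}). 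Then each $\mathcal{Z}_{n,r}$, being an element of this line, can be written uniquely as $\mathcal{Z}_{n,r} = g_{n,r}\cdot\mathfrak{Z}_0$ for some $g_{n,r}\in\mathcal{M}_{2k_0}$, and the content of the theorem is that $g_{n,r} = \mathcal{L}_{n,r}\cdot h$ for a single $h\in\mathcal{M}_{2k_0}$ independent of $(n,r)$; one then sets $\Phi^{\text{\'et}} = h\cdot\mathfrak{Z}_0$.

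The key step is to evaluate at arithmetic weights. For even integers $2k\in U$ with $2k\equiv 2k_0\bmod 2(p-1)$ and $2k>2$, Proposition \ref{InterpolationTheorem} gives, after dividing out the factor $(2D)^{(k-1)/2}$ implicit in the definition of $\mathcal{Z}_{n,r}$ and matching $(2D)^{\frac{\kappa-2}{4}}$ at $\kappa=2k$, an identity of the form
\[
\mathcal{Z}_{n,r}(2k) = \left(1-\tfrac{p^{k-1}}{a_p(2k)}\right)^2\cdot c_{f_{2k}^\sharp}(n,r)\cdot \Phi^{\text{\'et}}_{W_{2k-2},\Q}\!\left(s^*_{f_{2k}^\sharp}\right)
\]
inside $H^1(\Q,V^\dagger_{f_{2k}^\sharp})\otimes\bar\Q_p$, which (after identifying with the appropriate specialization of $\Sel_{\mathcal{M}_{2k_0}}$ via the control/comparison maps recalled in Sections \ref{section4.2}--\ref{theta-liftings}) exhibits the ratio $\mathcal{Z}_{n,r}(2k)/\mathcal{Z}_{n',r'}(2k)$ as $c_{f_{2k}^\sharp}(n,r)/c_{f_{2k}^\sharp}(n',r')$. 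On the other hand, Theorem \ref{LambdaJacobi} gives $\mathcal{L}_{n,r}(2k) = \lambda(k)\cdot\mathcal{E}_p^2\cdot c_{f_{2k}^\sharp}(n,r)$, so $\mathcal{L}_{n,r}(2k)/\mathcal{L}_{n',r'}(2k) = c_{f_{2k}^\sharp}(n,r)/c_{f_{2k}^\sharp}(n',r')$ as well (the factors $\lambda(k)$ and $\mathcal{E}_p^2$ depend only on $k$, not on $(n,r)$). Hence the meromorphic functions $g_{n,r}\cdot\mathcal{L}_{n',r'} - g_{n',r'}\cdot\mathcal{L}_{n,r}$ on $U$ vanish at all such $2k$; since these form a rigid-Zariski-dense set in a neighborhood of $2k_0$ and $\mathcal{A}_{2k_0}$ is an integral domain of functions analytic there, the functions vanish identically, giving $g_{n,r}/\mathcal{L}_{n,r} = g_{n',r'}/\mathcal{L}_{n',r'} =: h\in\mathcal{M}_{2k_0}$, independent of $(n,r)$. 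This $h$ is the coefficient producing $\Phi^{\text{\'et}}$, and by construction $\mathcal{Z}_{n,r} = \mathcal{L}_{n,r}\cdot\Phi^{\text{\'et}}$ on a sufficiently small neighborhood of $2k_0$.

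The main obstacle, and the step needing the most care, is the passage from the pointwise identities at arithmetic $2k$ to an identity of $\mathcal{M}_{2k_0}$-valued objects: one must check that $\kappa\mapsto\mathcal{Z}_{n,r}(\kappa)$ is genuinely the specialization of a single element of $\Sel_{\mathcal{M}_{2k_0}}(\Q,\mathbb{T}^\dagger)$ (this uses that $\mathfrak{Z}^{\mathrm{How}}_{D,r}$ lives in $\Sel(\Q,\mathbb{T}^\dagger)$ before specialization, together with $\tilde\varphi_{2k}$ being the unique arithmetic point over $\varphi_{2k}$ and the analytic factor $(2D)^{(\kappa-2)/4}$ making sense as a function on $U_{2k_0}$), and that the comparison maps from Castella's Theorem \ref{Castella} and the Abel-Jacobi images glue compatibly as $2k$ varies — i.e. that the Euler factor $(1-p^{k-1}/a_p(2k))^2$ appearing in \eqref{castella-coro} and the factor $\mathcal{E}_p^2$ appearing in Theorem \ref{LambdaJacobi} are literally the same analytic function of $\kappa$ on the split locus, which they are by inspection of \eqref{Ep}. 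Once the density argument is set up correctly this is essentially bookkeeping, but one should be careful that both sides are normalized by the same period $\lambda(k)$ and that the restriction to $2k\equiv 2k_0\bmod 2(p-1)$ (needed to invoke Castella) still leaves a Zariski-dense set in a neighborhood of $2k_0$, which it does since $2k_0\equiv 2\bmod(p-1)$ by hypothesis.
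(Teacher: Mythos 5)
Your proof is correct and follows essentially the same route as the paper: it likewise combines Proposition \ref{InterpolationTheorem} with Theorem \ref{LambdaJacobi} at the dense set of admissible even weights, in effect defines $\Phi^{\text{\'et}}=\mathcal{Z}_{n_0,r_0}/\mathcal{L}_{n_0,r_0}$ for a reference pair with $\mathcal{L}_{n_0,r_0}(2k_0)\neq 0$, and concludes by rigid-Zariski density of arithmetic weights; your version merely makes explicit the rank-one structure and the gluing that the paper leaves implicit. (One small caveat: the factors $\left(1-\tfrac{p^{k-1}}{a_p(2k)}\right)^2$ and $\mathcal{E}_p^2$ are not ``literally the same'' as you assert --- the exponent in Theorem \ref{LambdaJacobi} is apparently a slip for $\mathcal{E}_p$ --- but since your argument only compares ratios over varying $(n,r)$, this does not affect its validity.)
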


\begin{proof} Combining Proposition \ref{InterpolationTheorem} with Theorem \ref{LambdaJacobi} (in the split case) 
we obtain 
\[\lambda(k)\cdot\mathcal{Z}_{n,r}(2k)=
\mathcal{L}_{n,r}(2k)\cdot{\Phi^{\text{\'et}}_{W_{2k-2},\Q}\Big(s_{f_{2k}^\sharp}^*\Big)}.
\] 
The result follows by setting  
$\Phi^{\text{\'et}}(\kappa)=\frac{\mathcal{Z}_{n_0,r_0}(\kappa) }{\mathcal{L}_{n_0,r_0}(\kappa)}$
for any index pair with $\mathcal{L}_{n_0,r_0}(2k_0)\neq 0$, 
and using the density of the set 
of even positive integers in $\mathcal{X}$. 
\end{proof}

\begin{remark}
The paper \cite{CasHeeg} shows that 
Big Heegner points are closely related to 
$p$-adic $L$-functions \`a la Bertolini-Darmon-Prasanna, 
and Theorem \ref{AnalyticThm} shows
still another relation of this nature. However, note that the proof of this result makes an essential use of Castella's result. 
\end{remark}

\subsection{Twisted Gross-Kohnen-Zagier} \label{section5.2}
In this Section  we discuss the general conjecture suggested by the results obtained so far. 

Combining Theorems \ref{LambdaJacobi} 
and \ref{AnalyticThm} proves the formula 
 \begin{equation}\label{GKZ1}
(2D)^\frac{2k-2}{4} \cdot 2u_D\cdot\mathfrak{Z}^\mathrm{How}_{D,r,2k}=
c_{f_{2k}^\sharp}(n,r)\cdot
\frac{(2D_0)^{\frac{2k-2}{4}}\cdot 2u_{D_0}\cdot\mathfrak{Z}^\mathrm{How}_{D_0,r_0,2k}}{c_{f_{2k}^\sharp}(n_0,r_0)}
 \end{equation}
for each even positive integer $2k\equiv 2k_0\equiv2\mod p-1$, and all discriminants $D$ with $(D,N)=1$ and $p$ split in $K_D$, where 
we chose an index pair $(D_0,r_0)$ 
such that $\mathcal{L}_{n_0,r_0}(2k_0)\neq 0$ and $p$ splits in $K_{D_0}$. 
%

In \cite{LN-Jacobi} we construct a $p$-adic family of Jacobi forms 
$\mathbb{S}_{D_0,r_0}(\tilde\varphi)=\sum_{n,r}c_{n,r}(\tilde\varphi)q^n\zeta^r$ defined for $\tilde\varphi$ in the metaplectic covering $\tilde{\mathcal{X}}(\mathcal{R})$ of the weight space $\mathcal{X}(\mathcal{R})$, such that 
the specialisation of $\mathbb{S}_{D_0,r_0}(\tilde\varphi)$ at arithmetic points $\tilde\varphi$ lying over arithmetic points $\varphi\in\mathcal{X}^\mathrm{arith}(\mathcal{R})$ interpolates certain theta lifts of the classical forms $F_\varphi$, see \cite[Thm. 5.5]{LN-Jacobi} for details. In particular, 
\cite[Thm. 5.8]{LN-Jacobi} shows the equation 
\begin{equation}\label{GKZ3/2}
c_{n,r}(\tilde\varphi)=\lambda(k)\cdot \varphi(A_p)\cdot\mathcal{E}_2\cdot c_{f_{2k}^\sharp}(n,r) 
\end{equation}
where 
\begin{equation}\label{E2}
\mathcal{E}_2=\left(1-
\frac{2p^{k-1}}{a_p(\kappa)}
-\frac{p^{2k-2}}{a_p^2(\kappa)}
\right),\end{equation}
for each arithmetic point $\varphi\in\mathcal{X}(\mathcal{R})$ of trivial 
character and weight $2k$, where $\tilde\varphi$ is the lift of $\varphi$ 
to the metaplectic covering $\tilde{\mathcal{X}}(\mathcal{R})$ of $\mathcal{X}(\mathcal{R})$ having trivial character (see \cite[Thm. 5.8]{LN-Jacobi}). 

Combining \eqref{GKZ1} and 
\eqref{GKZ3/2}, we obtain for each arithmetic morphism $\varphi\in\mathcal{X}(\mathcal{R})$ of trivial character and weight $2k\equiv2k_0\equiv2\mod{p-1}$
with $2k>2$ (so that $\mathcal{E}_2\neq0$), 
the formula 
\begin{equation}\label{GKZ3}
(2D)^\frac{2k-2}{4} \cdot 2u_D\cdot\mathfrak{Z}^\mathrm{How}_{D,r}(\varphi)=
c_{n,r}(\tilde\varphi)\cdot(2D_0)^\frac{2k-2}{4} \cdot 2u_{D_0}\cdot\mathfrak{Z}^\mathrm{How}_{D_0,r_0}(\varphi)
\end{equation} where we write $\mathfrak{Z}^\mathrm{How}_{D,r}(\varphi)$ for the specialisation of $\mathfrak{Z}^\mathrm{How}_{D,r}$ at $\varphi$. 
 
Assume now that $k_0$ is congruent to $2$ modulo $4$. Then there are two distinct ways to $p$-adically interpolate the term
$(2D)^{\frac{2k-2}{4}}$ appearing in \eqref{GKZ3}, 
by choosing a square root of the critical character $\theta$ used 
to define Big Heegner points. 
Indeed, we may define the square roots $\theta_{a}^{1/2}:\Z_p^{\times} \rightarrow\Lambda^\times$ for $a \in \left\{ 0,1 \right\}$
of the critical character $\theta$ by 
\[\theta_{a}^{1/2}= \Big(\frac{\ }{p}\Big)^{a}  \cdot \epsilon_\mathrm{tame}^{\frac{2k_0-2}{4}}\cdot\left[\epsilon_\mathrm{wild}^{1/4}\right]\]
where $\epsilon_\mathrm{wild}^{1/4}$ is the unique square root of $\epsilon_\mathrm{wild}^{1/2}$ 
taking values in $1+p\Z_p$. Since $k_0$ is congruent to $2$ modulo $4$, the expression  $\epsilon_\mathrm{tame}^{\frac{k_o-2}{4}}$ is without ambiguity. 
For each $\varphi\in\mathcal{X}(\mathcal{R})$ 
we let $\theta_{a,\varphi}^{1/2}$ be the composition of $\theta_{a}^{1/2}$ with the restriction of $\varphi$ 
to $\Lambda^\times$. On the $\mathcal{O}$-module $\Lambda$ we define 
a new structure of $\Lambda$-algebra $\sigma: \Lambda\rightarrow\Lambda$ given by the map $\sigma(t)=t^2$, and we denote this new $\Lambda$-algebra 
by $\tilde{\Lambda}$. 
For any $\tilde\varphi\in\mathcal{X}(\tilde{\mathcal{R}})$ choose $a(\tilde\varphi)\in\{0,1\}$ such that 
the restriction of $\tilde\varphi$ to $\tilde{\Lambda}$ is equal to $\theta^{1/2}_{a(\tilde\varphi),\varphi}$. Then, if $\varphi$ has trivial character and weight $2k$, and $\tilde\varphi$ is the lift of $\varphi$ with trivial character as above, we have $\theta^{1/2}_{a(\tilde\varphi),\varphi}(2D)=(2D)^\frac{2k-2}{4}$. 
It makes then sense to define 
for any  $\tilde{\varphi} \in \calX(\tilde{\mathcal{R}})$, the element 
\[{\mathcal{Z}}_{D,r}(\tilde\varphi)=\theta_{\alpha(\tilde\varphi),\varphi}^{1/2}(2D)\cdot 2u_D\cdot\mathfrak{Z}^\mathrm{How}_{D,r}(\varphi)\] where $\varphi=\pi(\tilde\varphi)$ 
and $\pi:\tilde{\mathcal{X}}(\mathcal{R})\twoheadrightarrow\mathcal{X}(\mathcal{R})$ is the covering map (for the choice of $\varphi=\varphi_{2k}$ with trivial character and $\tilde\varphi$ the lift of $\varphi$ with trivial character, this coincides with the element $\mathcal{Z}_{n,r}(2k)$ defined above, so the new notation is consistent with the old one). Then \eqref{GKZ3} reads as 
\begin{equation}\label{GKZ4}
{\mathcal{Z}}_{D,r}(\tilde\varphi)=c_{n,r}(\tilde\varphi)\cdot
{\mathcal{Z}}_{D_0,r_0}(\tilde\varphi)\end{equation}
We remark that this formula holds under the restrictive conditions that 
$\varphi$ has trivial character and weight $2k\equiv 2k_0\equiv 2\mod{p-1}$, 
$2k_0\equiv 2\mod 4$, $2k>2$, $p$ splits in $D$ (and $D_0$), 
and $\tilde\varphi$ the lift of $\varphi$ with trivial character. However, \eqref{GKZ4} makes sense for all arithmetic 
primes $\tilde\varphi\in\tilde{\mathcal{X}}(\mathcal{R})$ and even if $p$ is inert in $D$ (at least under the condition that $2k_0\equiv 2\mod 4$), 
and it might be viewed as a fuller analogue of the GKZ Theorem over a larger portion of the whole weight space. It is then natural to state the following 
\begin{conjecture} \label{conjecture2} 
Suppose that $2k_0\equiv 2\mod{4}$. Then for all $(D,r)$ and all $\varphi$ in a Zariski open of $\mathcal{X}(\mathcal{R})$ we have 
\[{\mathcal{Z}}_{D,r}(\tilde\varphi)=c_{n,r}(\tilde\varphi)\cdot
{\mathcal{Z}}_{D_0,r_0}(\tilde\varphi).\] 
\end{conjecture}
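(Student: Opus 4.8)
The plan is to reduce Conjecture~\ref{conjecture2} to two ingredients: a rigidity (analytic continuation) argument that promotes the restricted identity \eqref{GKZ4} from the thin locus where it is known to a Zariski open of $\tilde{\mathcal X}(\mathcal R)$, and an extension of the inputs behind \eqref{GKZ1}--\eqref{GKZ4} --- Castella's comparison theorem, the interpolation formula \eqref{interpol}, and Xue's theorem --- from the case ``$p$ split in $K_D$'' to the inert and the non-coprime cases, which is what is needed to reach \emph{all} $(D,r)$.

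\emph{Rigidity.} Fix $(D,r)$ with $(D,Np)=1$ and $p$ split in $K_D$ (recall $p$ is split in $K_{D_0}$ as well). By \eqref{GKZ4} the identity $\mathcal Z_{D,r}(\tilde\varphi)=c_{n,r}(\tilde\varphi)\cdot\mathcal Z_{D_0,r_0}(\tilde\varphi)$ holds whenever $\varphi=\pi(\tilde\varphi)$ has trivial nebentype and weight $2k\equiv 2k_0\equiv 2\pmod{p-1}$ with $2k>2$ and $\tilde\varphi$ is its trivial-character lift. I would first observe that both sides vary meromorphically with $\tilde\varphi$: since $\mathfrak Z^{\mathrm{How}}_{D,r}$ and $\mathfrak Z^{\mathrm{How}}_{D_0,r_0}$ lie in the rank-one $\mathcal R$-module $\Sel(\Q,\mathbb T^\dagger)$ (Assumption~\ref{ass2} and \cite{Ho1}), their ratio lies in $\mathcal K=\mathrm{Frac}(\mathcal R)$, and the remaining factor $\theta^{1/2}_a(2D)/\theta^{1/2}_a(2D_0)$ is rigid analytic, so $\tilde\varphi\mapsto\mathcal Z_{D,r}(\tilde\varphi)/\mathcal Z_{D_0,r_0}(\tilde\varphi)$ is meromorphic, while $c_{n,r}(\tilde\varphi)$ is a Fourier--Jacobi coefficient of the $p$-adic family of Jacobi forms of \cite{LN-Jacobi} and is meromorphic as well. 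The set of trivial-character lifts of weight-$\equiv 2k_0$ points is Zariski dense in $\tilde{\mathcal X}(\mathcal R)$ (it lies over a Zariski-dense, indeed accumulating, subset of $\Spec(\Lambda)$), so \eqref{GKZ4} forces the identity on the Zariski-open complement of the poles, of the vanishing locus of $\mathcal Z_{D_0,r_0}$ (finite by Howard's non-vanishing results), and of the finitely many arithmetic points where the comparison with the Bloch--Kato Selmer group degenerates. This already yields Conjecture~\ref{conjecture2} for all $(D,r)$ with $(D,Np)=1$ and $p$ split in $K_D$.

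\emph{Inert and non-coprime discriminants.} To drop the splitting hypothesis I would establish the inert analogue \eqref{inert formula} of Castella's Theorem~\ref{Castella}; as suggested in the remark following that theorem, Disegni's results \cite{Disegni} should provide it, the delicate point being to upgrade his control-theorem-type comparison of two one-dimensional spaces into an equality of cohomology classes carrying the precise Euler factor $1-p^{2k-2}/a_p^2(2k)$ and the right period. Granting \eqref{inert formula}, combining it with the inert case of \eqref{interpol} (which is \cite[Prop.~2.4]{BD}), the inert case of Xue's Theorem~\ref{Xue}, and the Hecke-operator argument of Proposition~\ref{InterpolationTheorem} yields the inert analogue of \eqref{GKZ1}, and re-running the rigidity step then covers every $(D,r)$ with $(D,Np)=1$. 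For discriminants with $(D,N)\neq 1$ one passes to the \emph{ideal} Gross--Kohnen--Zagier statement: following Borcherds \cite{Bor1}, Yuan--Zhang--Zhang \cite{YZZ} and, in higher weight, Zemel \cite{Zemel}, one would show directly that $\sum_{n,r}\mathcal Z_{D,r}(\tilde\varphi)\,q^n\zeta^r$ is a (vector-valued, then $f$-isotypic) Jacobi form and identify it with $\mathbb S_{D_0,r_0}(\tilde\varphi)$ by matching Hecke eigensystems.

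\emph{Main obstacle.} The rigidity step is essentially formal once the relevant objects are known to interpolate $p$-adic analytically; the serious difficulties lie elsewhere. First, the inert comparison: Castella's proof is genuinely tailored to $p$ split, and even with \cite{Disegni} in hand the tracking of Euler factors and $p$-adic periods is substantial. Second, and more seriously, the ideal statement: already in the classical setting it rests on Borcherds's singular theta lifts, and in higher weight it would require a $1$-dimensionality statement for Heegner-cycle-valued Jacobi forms which, as explained in Remark~\ref{remark height pairing}, does not presently follow from \cite{Zemel}. I expect the non-coprime case to be the most resistant part of the conjecture.
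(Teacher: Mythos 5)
The statement you are asked to prove is stated in the paper as an open conjecture: the authors do not prove it, and their only evidence for it is the identity \eqref{GKZ4}, valid under exactly the restrictive hypotheses you start from (trivial nebentype, weight $2k\equiv 2k_0\equiv 2\bmod{p-1}$ with $2k>2$, $(D,Np)=1$, and $p$ split in $K_D$ and $K_{D_0}$), together with the strategy sketched in Remark \ref{rem5.4}, which goes through weight-two primes rather than through higher-weight specializations. Your rigidity step is the reasonable and essentially correct part of the proposal: both sides of \eqref{GKZ4} are ratios of elements of the rank-one module $\Sel(\Q,\mathbb T^\dagger)\otimes_{\mathcal R}\mathcal K$ times rigid-analytic characters, hence lie in $\mathrm{Frac}(\tilde{\mathcal R})$, and since a nonzero element of $\tilde{\mathcal R}$ (finite over $\tilde\Lambda\cong\mathcal O[\![X]\!]$) cannot vanish at the infinitely many trivial-character points of weight $\equiv 2k_0\bmod{(p-1)}$ lying in the disc $U_{2k_0}$, agreement there forces agreement generically --- modulo checking irreducibility of the metaplectic branch so that ``infinite'' really implies ``Zariski dense''. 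This upgrades the paper's local statement to a Zariski-open one, but only for the split, coprime discriminants; it proves a fragment of the conjecture, not the conjecture.

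The genuine gap is that everything needed to go beyond that fragment is deferred to inputs that are themselves open. The inert analogue \eqref{inert formula} of Castella's theorem is explicitly conjectural in the paper (the remark following Theorem \ref{Castella} explains why Disegni's control-theorem comparison of two one-dimensional spaces does not by itself pin down the Euler factor), so ``granting \eqref{inert formula}'' is assuming an unproved statement of essentially the same depth as the case you are trying to add. Likewise, the non-coprime case is routed through the ideal Gross--Kohnen--Zagier statement in higher weight, which, as Remark \ref{remark height pairing} stresses, does not follow from Zemel's work without a one-dimensionality statement for $\mathrm{Heeg}_k(X_0(N))_f\otimes_\Z\Q$ that is currently unavailable. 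You acknowledge both obstacles candidly, which is to your credit, but it means the proposal is a research programme rather than a proof; its unconditional content coincides with what the paper already establishes (plus the density upgrade), and it takes a different density argument from the one the authors themselves envisage in Remark \ref{rem5.4} (higher-weight trivial-character points versus weight-two primes with arbitrary nebentype), a route which by construction cannot see the inert or non-coprime $(D,r)$ without the missing inputs.
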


\begin{remark}\label{rem5.4} If $\varphi$ is a weight two arithmetic prime (with possibly non-trivial character), Conjecture \ref{conjecture} would imply a twisted Gross-Kohnen-Zagier Theorem. More precisely, the specialisation of ${\mathcal{Z}}_{D,r}(\tilde\varphi)$ 
a weight $2$ prime $\varphi$ is a linear combination of Heegner points (see \cite[Sec. 3]{Ho1}, \cite[Sec. 5.1]{Cas}, \cite[Sec. 3.2]{LV1}), and therefore Conjecture \ref{conjecture} says that the ratio between these specialisations is given by coefficients of Jacobi forms, which we interpret as a {twisted GKZ theorem}; recall that {twisted Gross-Zagier theorems} are already available in the literature: see \cite{Ho3}. Moreover, on the opposite direction, note that  if this type of twisted GKZ theorems would be known in the rather general context, similar to that of \cite{Ho3},
 of modular forms of level $\Gamma_1(M)$, then Conjecture \ref{conjecture} would follow from the Zariski density of weight $2$ primes in the weight space. It would be very much interesting to prove such a twisted Gross-Kohnen-Zagier theorems (in parallel to what was done by \cite{Ho3} for twisted Gross-Zagier theorems), and the authors hope to come back in the future to this problem. 
\end{remark} 

\begin{rmk}
Finally, we would like to indicate how our work might shed some light on the main theorem of \cite[Thm. 1.4]{BDPChow}; we thank F. Castella for pointing this out to us. There, it is shown that the cohomology class $\Phi^\text{\'et}(\Delta_t)$ of a generalized Heegner cycle $\Delta_t$ is equal to $m_{D,t} \cdot \sqrt{D} \cdot \delta(P_D)$, where $P_D$ is a point independent of $t$, and $m_{D,t} \in \Z$ satisfies:
\[ m^2_{D,t} = \frac{2 t! ( 2\pi \sqrt{-D})^t}{\Omega^{2t+1}} L(\psi^{2t+1}, t+1),\]

\noindent where $\Omega(A)$ is some complex period, and $L(\psi^{2t+1}, t+1)$ a Hasse-Weil $L$-series attached to a Hecke character $\psi^{2t+1}$. This suggests, in line with the philosophy of the original GKZ theorem, that the coefficients $m_{D,t}$ might have some relationship with Fourier coefficients of a Jacobi form lifting a Hecke theta series.

\end{rmk}

\bibliographystyle{amsalpha}
\bibliography{references}
\end{document}